\title{\normalsize \bf REPRESENTING FINE SHAPE OF LOCAL COMPACTA BY HOMOTOPY CLASSES OF ORDINARY MAPS}
\author{\footnotesize VLADISLAV ZEMLYANOY \\
  \footnotesize {\it Doctoral School in Mathematics, Higher School of Economics (HSE University)} \\
  \footnotesize {\it Moscow, Russia} \\
  \footnotesize {\it tagambit@yandex.ru} \\
  \footnotesize {\it ORCID} 0000-0001-7450-5325
}
\date{}
\begin{document}
\def \R {\mathbb{R}}
\def \N {\mathbb{N}}
\def \i {\imath}
\def \j {\jmath}

\setlength{\parskip}{0.25em}

\newcounter{count}[section]
\renewcommand{\thecount}{\arabic{section}.\arabic{count}}

\theoremstyle{plain}
\newtheorem{theorem}[count]{Theorem}
\newtheorem*{theorem*}{Theorem}
\newtheorem{lemma}[count]{Lemma}
\newtheorem{proposition}[count]{Proposition}
\newtheorem{corollary}[count]{Corollary}

\theoremstyle{definition}
\newtheorem{definition}[count]{Definition}
\newtheorem*{convention}{Convention}
\newtheorem*{notation}{Notation}
\newtheorem{example}[count]{Example}

\theoremstyle{remark}
\newtheorem{remark}[count]{Remark}

\maketitle

\begin{abstract}
\noindent 
Fine shape, as defined by Melikhov, is an extension of the strong shape 
category of compacta (compact metrizable topological spaces) to all 
metrizable spaces, notable for being compatible with both \v{C}ech 
cohomology and Steenrod-Sitnikov homology. In this work we study fine 
shape of local compacta (locally compact separable metrizable spaces), 
and construct, for every local compactum $X$, a space $|X|$ unique up 
to a homotopy equivalence and such that fine shape classes from any 
locally compact metrizable space $Y$ to $X$ bijectively correspond to 
homotopy classes of ordinary maps from $Y$ to $|X|$. This correspondence 
is (contravariatly) functorial in $Y$, thus giving a representation of 
$Y$-dependent contravariant functor for a fixed $X$; the universal class 
corresponding to the identity map of $X$ is the homotopy class of a 
specific embedding of $X$ into $|X|$ that is a fine shape equivalence.
\vskip 1em
\noindent {\it Keywords:} Metrizable topological spaces; shape theory; fine shape; homotopy representations.
\vskip 1em
\noindent AMS Subject Classification: 54C56, 54E45, 55P55
\end{abstract}

\section{Introduction}
\label{sec_intro}

The strong shape category of compact metrizable topological spaces 
(compacta) is well-known, defined in multiple ways that all turn out to 
be 
equivalent~\cite{Bauer1978, Kodama1979, Calder1981, Cathey1981, DydakSegal1981}; 
its theory and applications are extensive, and still being researched 
(for example,~\cite{Mrozik2008,Baladze2020}). In the case of noncompact 
spaces, strong shape is less extensively studied or applied. Strong 
shape category even of all topological spaces can be 
defined~\cite{Batanin1997,Mardesic2000} and has interesting properties 
of its own, though the complexity inherent in the definition and 
computations seems to have limited its usage in practice.

Fine shape, as defined by Melikhov~\cite{Melikhov2022F}, is a different 
extension of strong shape from compacta to all metrizable topological 
spaces. Notably, as~\cite{Melikhov2022F} proves, a (co)homology theory 
is fine shape invariant if and only if it satisfies the map excision 
axiom (that is, for any closed map $f\colon (X,A) \to (Y,B)$ between 
pairs of spaces, if $f$ restricts to an homeomorphism of $X\setminus A$ 
onto $Y\setminus B$, then $f$ induces an isomorphism 
$H_{n}(X,A) \simeq H_{n}(Y,B)$ for all $n$); 
this includes both \v{C}ech cohomology and Steenrod-Sitnikov homology 
(defined as the direct limit of Steenrod homology of compacta; see 
footnote~3 in~\cite{Melikhov2022F}). These two theories, in turn, seem 
to be a natural choice of homology and cohomology for metrizable spaces, 
and especially for local compacta --- that is, metrizable topological 
spaces that are locally compact and separable (equivalently, 
second-countable); see~\cite{Sklyarenko1979}. 
This makes fine shape a strong candidate for a good homotopy theory of 
metrizable spaces; what makes it potentially useful, however, is the 
simplicity of its definition and application compared to the established 
noncompact strong shape.

The definition of fine shape is based on the following. Every metrizable 
space $X$ can be embedded as a closed subset in a metrizable space $M$ 
that is an absolute retract (equivalently, absolute extensor) with 
respect to metrizable spaces: for every closed subset $A \subseteq Z$ 
(with $Z$, and then also $A$, metrizable), every continuous map 
$f\colon A \to M$ can be extended to a continuous map 
$\bar{f}\colon Z \to M$. Moreover, it can be done so that $X$ is 
{\it homotopy negligible} in $M$, meaning that there exists a homotopy 
$H\colon M \times [0,1] \to M$ with 
$H_{0} = id_{M}$ and $H(M \times (0,1]) \subseteq M\setminus X$. Now we 
have the following

\begin{definition}
Let two metrizable spaces $X$ and $Y$ be embedded as closed homotopy 
negligible subsets in absolute retracts $M$ and $N$ respectively. A 
continuous map $\phi\colon M\setminus X \to N\setminus Y$ is called 
{\it $X-Y$-approaching} if for every sequence $\{p_{n}\}$ of points of 
$M\setminus X$ that has an accumulation point in $X$, the sequence 
$\{\phi(p_{n})\}$ (of points of $N\setminus Y$) has an accumulation 
point in $Y$.
\end{definition}

\noindent This definition clearly looks like an extension of the 
usual notion of continuous map, and approaching maps can be obtained 
from ordinary maps from $X$ to $Y$:

\begin{proposition}(Lemma~\ref{extend_map_to_fSh})
Let $f\colon X \to Y$ be a continuous map between metrizable spaces $X$ 
and $Y$, and let the those be embedded as closed homotopy negligible 
subsets in absolute retracts $M$ and $N$ respectively. Then 

(1)There eixsts an $X-Y$-approaching map 
$\phi\colon M\setminus X \to N\setminus Y$ such that $\phi$ and $f$ 
combine in a continuous map from $M$ to $N$;

(2)For any two homotopic maps $f,f'\colon X \to Y$, any two corresponding maps 
$\phi,\phi'$ given by (1) are homotopic through maps of the same class.
\end{proposition}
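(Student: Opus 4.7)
My plan is to carry out a single construction using the two defining features of the ambient data: the AR property of $N$, which extends partial maps into $N$, and the homotopy negligibility of $Y$ in $N$, which provides a deformation $H\colon N \times [0,1] \to N$ with $H_{0} = \mathrm{id}_{N}$ and $H(N \times (0,1]) \subseteq N \setminus Y$. Once and for all, I will fix such an $H$ together with a continuous function $\rho\colon M \to [0,1]$ with $\rho^{-1}(0) = X$ (for example $\rho(m) = d(m,X)/(1+d(m,X))$ for a compatible metric $d$), which exists because $X$ is closed in the metrizable space $M$.

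For (1), I would first extend $f\colon X \to Y \subseteq N$ to a continuous $\bar{f}\colon M \to N$ by the AR property of $N$, and then set $\Phi(m) = H(\bar{f}(m), \rho(m))$. Then $\Phi\colon M \to N$ is continuous; $\Phi|_{X} = f$ because $\rho|_{X} \equiv 0$ and $\bar{f}|_{X} = f$; and $\Phi(M \setminus X) \subseteq N \setminus Y$ because $\rho$ is strictly positive there. Setting $\phi = \Phi|_{M \setminus X}$ gives a continuous map into $N \setminus Y$ that combines with $f$ into the continuous map $\Phi$, from which the approaching condition is immediate: any sequence in $M \setminus X$ accumulating at $x \in X$ is sent by $\phi$ to a sequence accumulating at $f(x) \in Y$.

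For (2), given a homotopy $F\colon X \times I \to Y$ from $f$ to $f'$ and combined maps $\Phi, \Phi'\colon M \to N$ produced in (1), the idea is to iterate the same construction one dimension higher. I will consider the closed subset $A = X \times I \,\cup\, M \times \{0,1\} \subseteq M \times I$ and the continuous map $G_{0}\colon A \to N$ equal to $F$ on $X \times I$ and to $\Phi, \Phi'$ on $M \times \{0\}$ and $M \times \{1\}$ respectively (these pieces agree on the overlap $X \times \{0,1\}$ since $\Phi|_{X} = f = F(\cdot,0)$ and similarly for $\Phi'$). Extending $G_{0}$ to $G\colon M \times I \to N$ by the AR property of $N$, choosing a continuous $\sigma\colon I \to [0,1]$ with $\sigma^{-1}(0) = \{0,1\}$ (for example $\sigma(t) = 4t(1-t)$), and setting $\Psi(m,t) = H(G(m,t),\, \rho(m)\sigma(t))$, I observe that the weight $\rho(m)\sigma(t)$ vanishes exactly on $A$. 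Hence $\Psi$ agrees with $G_{0}$ on $A$ and maps the complement into $N \setminus Y$, so $\Psi|_{(M \setminus X) \times I}$ is the desired homotopy $\phi \simeq \phi'$; and for every $t$ the slice $\Psi_{t}\colon M \to N$ is continuous and restricts to $F(\cdot,t)\colon X \to Y$, so $\Psi_{t}|_{M \setminus X}$ is itself an $X-Y$-approaching map, as required.

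The only point that needs to be arranged carefully is the push-off weight in (2): it must vanish on \emph{all} the prescribed data (the homotopy $F$ on $X \times I$ and the given combined maps $\Phi, \Phi'$ on $M \times \{0,1\}$) and be strictly positive elsewhere, so that the AR extension $G$ is not disturbed on $A$ but is pushed off of $Y$ everywhere else. Factoring the weight as a product of $\rho(m)$, which vanishes precisely on $X$, and $\sigma(t)$, which vanishes precisely at the endpoints of $I$, handles both constraints simultaneously, and everything else is a formal continuity check.
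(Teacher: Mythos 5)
Your construction is essentially identical to the paper's proof: extend into $N$ by the AR property, then push off $Y$ using the negligibility homotopy $H$ weighted by a continuous function vanishing exactly on the prescribed set ($X$ in part (1), $X\times[0,1]\cup M\times\{0,1\}$ in part (2)). Your product $\rho(m)\sigma(t)$ is just an explicit instance of the paper's single function $h$ with $h^{-1}(0)=M\times\{0,1\}\cup X\times[0,1]$, and the argument is correct.
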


\noindent This means that every map (homotopy class of maps, actually) 
from $X$ to $Y$ specifies a homotopy class of $X-Y$-approaching maps 
for any choice of $M$ and $N$. The set of those latter homotopy classes, 
too, turns out to be independent of that choice, depending instead only 
on $X$ and $Y$; this is why we can call every such homotopy class a 
{\it fine shape class} from $X$ to $Y$. Further, we can define 
composition of fine shape classes, identity fine shape classes, and so 
on. From this rough description, it can be seen why fine shape is 
relatively straightforward to work with; the following sections will 
necessarily give examples of actual usage of the definition.

Our goal for the present work is to represent fine shape morphisms via 
(homotopy classes of) ordinary maps. For a restricted class of spaces, 
it turns out to be possible, if notationally cumbersome, to give a 
representation of fine shape morphisms into a given space that is 
functiorial in the domain. To elaborate, we eventually restrict our 
attention to an arbitrary local compactum (that is, a locally compact 
separable metrizable topological space) $X$, and construct a metrizable 
space $|X|$ (which we show to be unique up to a homotopy equivalence) 
such that for all locally compact metrizable spaces $Y$, there is a 
bijection between the set of homotopy classes of continuous maps 
$[Y,|X|]$ and the set of fine shape classes $[Y,X]_{fSh}$, with the 
bijection contravariantly functorial in $Y$.

If we assume $X$ to be a compactum, the construction can be simplified 
(although it is still homotopy equivalent to the general one); moreover, 
$Y$ can then be assumed to be any metrizable space at all. As strong 
shape of compacta is well-studied, there is effectively such a 
construction already given by Cathey~\cite{Cathey1981}, except that it 
is introduced and used there in a different way. On the other hand, our 
general construction of $|X|$ is better demonstrated by first defining 
the simpler compact case, and then modifying it for the broader class of 
spaces.

This dictates the structure of the present work: after stating 
the previously known definitions and theorems we rely on in 
Section~\ref{sec_prelim}, we devote Section~\ref{sec_compact_case} to 
the compact case; there we also explain how our construction of $|X|$ 
relates to the statements of~\cite{Cathey1981}. 
Section~\ref{sec_exh_fun} diverts from the fine shape discussion to 
establish a particular class of functions we use in the general case; 
nothing in it is mathematically challenging, but we feel that explaining 
the notion fully makes the actual general construction clearer, given 
the somewhat convoluted notation involved. Finally, in 
Section~\ref{sec_classifier} we define the space $|X|$ for any local 
compactum $X$, and prove the combination of properties we claim it to 
have. The end result of the present work is the following

\begin{theorem}(Theorem~\ref{th_univ}, Corollary~\ref{cor_class}, Remark~\ref{rem_repr})
For any local compactum $X$, there exists a metrizable space $|X|$ 
unique up to a homotopy equivalence and such that for every locally 
compact metrizable space $Y$, there is a bijection 
$[Y,X]_{fSh} \simeq [Y,|X|]$, contravariantly functorial in $Y$.
\end{theorem}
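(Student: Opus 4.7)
The plan is to construct $|X|$ in two stages: first for compact $X$, using a direct mapping-telescope construction in the spirit of Cathey~\cite{Cathey1981}, and then for a general local compactum by gluing the compact constructions along a compact exhaustion using the machinery of Section~\ref{sec_exh_fun}. In each case the space $|X|$ would come with a canonical embedding $e : X \hookrightarrow |X|$ that is a fine shape equivalence and that additionally satisfies the auxiliary property that ordinary homotopy classes of maps into $|X|$ coincide with fine shape classes of such maps. Granted this, the desired bijection factors as
\[
[Y, X]_{fSh} \xrightarrow{\;e_\ast\;} [Y, |X|]_{fSh} \xleftarrow{\;\sim\;} [Y, |X|],
\]
natural in $Y$ by inspection.

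For the compact case, I would fix a closed homotopy negligible embedding $X \hookrightarrow M$ into a metric AR and a cofinal decreasing sequence $U_1 \supset U_2 \supset \dotsb$ of open ANR neighbourhoods of $X$ with $\bigcap_n U_n = X$. Let $|X|$ be the infinite mapping telescope of the inclusions $U_{n+1} \hookrightarrow U_n$, completed by gluing $X$ in as the limit end. This space is metrizable, it is ANR-like off $X$, and the cofinal neighbourhood system $\{U_n\}$ constitutes a fine shape expansion of $X$, so that $e : X \hookrightarrow |X|$ is a fine shape equivalence. The technical point is that the telescope structure lets an approaching map $\phi : N \setminus Y \to M \setminus X$ supplied by Lemma~\ref{extend_map_to_fSh} be deformed inside successive $U_n$ and then extended to an ordinary continuous map $Y \to |X|$, uniquely up to homotopy, giving the inverse of $e_\ast$ on hom-sets.

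For a general local compactum $X$, I would choose an exhaustion $X_1 \subset X_2 \subset \dotsb$ by compact subsets with $X_n \subset \mathrm{int}\, X_{n+1}$ and $\bigcup_n X_n = X$ (encoded canonically by a continuous exhaustion function as developed in Section~\ref{sec_exh_fun}), apply the compact construction to each $X_n$, and glue the resulting telescopes into one metrizable space $|X|$ along the inclusions $X_n \hookrightarrow X_{n+1}$. The exhaustion-function formalism is what makes this gluing canonical up to homotopy equivalence and suppresses spurious behaviour at infinity. The embedding $e$, the fine shape equivalence property, and the bijection then all follow as in the compact case once it is checked that every approaching map and every approaching homotopy can be adjusted to respect the exhaustion. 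Functoriality in $Y$ is immediate, because the bijection is described as precomposition with the single universal class $[e]^{-1} \in [|X|, X]_{fSh}$. Uniqueness up to a homotopy equivalence is a Yoneda argument: any two representing spaces yield mutually inverse maps through their universal classes, which compose to self-maps that represent the identity fine shape class and hence are homotopic to identities.

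The main obstacle is the noncompact gluing step. In the compact case a single mapping telescope suffices because one shrinking neighbourhood system is already a fine shape expansion; in the noncompact case one must combine infinitely many such telescopes without accidentally enlarging the fine shape type of $X$ or breaking the coincidence of ordinary and fine shape classes into $|X|$. The role of the exhaustion-function framework of Section~\ref{sec_exh_fun} is precisely to pin down a canonical gluing recipe so that, once in place, the bijection and functoriality arguments reduce to formal bookkeeping with Lemma~\ref{extend_map_to_fSh} and the homotopy-negligibility of $X$ in its ambient AR.
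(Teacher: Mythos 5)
Your compact-case construction is the wrong homotopy-theoretic object, and this is a genuine gap rather than a stylistic difference. The space that represents $[Y,X]_{fSh}$ for compact $X$ must be a model of the \emph{homotopy inverse limit} of a neighborhood system of $X$ --- this is what the paper (following Cathey) builds: the space of approaching paths $\gamma\colon [0,1] \to M/X$ with $\gamma(0) = \{X\}$ and $\gamma((0,1]) \subseteq M\setminus X$, i.e.\ paths in $M\setminus X$ that converge to $X$. Your infinite mapping telescope of the inclusions $U_{n+1} \hookrightarrow U_n$ with $X$ glued in at the limit end is a model of the homotopy \emph{colimit} direction, and it gives the wrong answer. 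Concretely, take $X = \Sigma$ a $p$-adic solenoid and $Y$ a point: $[pt,\Sigma]_{fSh}$ agrees with the strong shape set $\pi_0(\mathrm{holim}\, U_n)$, which by the Milnor sequence contains $\lim^1\bigl(\cdots \xrightarrow{\times p} \mathbb{Z} \xrightarrow{\times p} \mathbb{Z}\bigr)$ and is uncountable; but your telescope-with-end is path-connected (any point of $\Sigma$ at the limit end is joined to the telescope body by an obvious path), so $[pt, |{\Sigma}|] = \pi_0$ is a single point. No bijection is possible. The ``technical point'' you defer --- deforming an approaching map inside successive $U_n$ and extending it to a map into the telescope --- is exactly where this would surface: a coherent string of paths converging to $X$ is a point of the path space, not of the telescope.

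Separately, even with the correct compact construction in hand, your noncompact step does not engage with the actual obstruction. The paper's Definition~\ref{def_classifier} is not a discrete gluing of the $|X_n|$ along inclusions but a continuously parametrized union $\bigcup_{\varepsilon \in (0,1]} \{\varepsilon\} \times |M_{\varepsilon}|$ over the level sets of an exhaustion function, with a metric built from isometric embeddings $|M_{\varepsilon}| \subseteq |M_{\varepsilon'}|$. The point of this indexing is identified explicitly in the paper: for noncompact $X$ the strong deformation retraction $R$ of $|M|\setminus|X|$ onto $M\setminus X$ fails to be approaching, because a sequence $\gamma_n(s_n)$ with $s_n \to 0$ need not have any accumulation point in $M$; tagging each path with an $\varepsilon$ that confines it to the compactum $u^{-1}([\varepsilon,1])$ is what restores the accumulation points and makes $X \hookrightarrow |X|$ an FDR-embedding. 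Saying the exhaustion formalism ``suppresses spurious behaviour at infinity'' names the symptom but supplies no mechanism; likewise, your identification $[Y,|X|] \cong [Y,|X|]_{fSh}$ is precisely the fibrancy statement (Theorem~\ref{th_univ}, extension of maps along FDR-embeddings of locally compact spaces), which is the paper's main technical result and cannot be taken as an ``auxiliary property'' of the construction. The surrounding formal skeleton --- universal class, fine shape cylinders, Yoneda-style uniqueness --- does match the paper, but it rests on a construction that has to be replaced.
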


Other results of further interest include the structure of the space 
$|X|$ itself, showing how to apply this representation in practice, as 
well as the expected universal homotopy class in $[X,|X|]$, which 
corresponds under the bijection to the identity class of $[X,X]_{fSh}$. 
This class is best represented by a particular closed embedding of $X$ 
into $|X|$ that is also a fine shape equivalence; 
from our work in~\cite{Zemlyanoy1}, we call such maps FDR-embeddings 
(see Proposition~\ref{prop_fdr_char}).

Finally, it should be noted that while we do achieve the goal of using 
ordinary maps to represent fine shape classes, our construction is not 
category-theoretical perfect (see Remark~\ref{rem_repr}). This is one 
reason to raise a question of achieving the same result for a different 
class of spaces; another reason, of course, is to try to find a 
generalization of $|X|$ to any metrizable space $X$.

\section{Preliminaries}
\label{sec_prelim}

In this section we introduce definitions and conventions we shall use, 
as well as various previously established results on which we rely.

\subsection{Spaces}
\label{def_spaces}

We work exclusively with metrizable topological spaces and 
continuous maps between them, so we adopt the following

\begin{convention}
By a space, we shall always mean a metrizable topological space, unless 
specified otherwise. By a map between spaces, we shall always mean a 
continuous map.
\end{convention}

\begin{remark}\label{rem_metr_seq}
It is often convenient to understand the topology of a metrizable space 
in terms of point convergence; this is applicable due to the following 
property: on a given set, any two metrizable topologies with the same 
point convergence (that is, a sequence of points $\{x_{n}\}_{n \in \N}$ 
converges to a point $x$ in one topology if and only if it converges to 
the same point in the other) must coincide. Be reminded that this is not 
true if even one of these topologies is not metrizable. Similarly, a map 
$f\colon X \to Y$ between metrizable spaces is continuous if and only if 
for every sequence $\{x_{n}\}_{n \in \N}$ of points of $X$ converging to 
a point $x$, we have $f(x_{n}) \to f(x)$ in $Y$; we shall make use of 
this way to prove map continuity.
\end{remark}

\begin{notation}
We denote the homotopy class of a map $f$ by $[f]$, and we write 
$f \simeq g$ to mean that maps $f$ and $g$ are homotopic. We denote the 
set of all homotopy classes from $X$ to $Y$ by $[X,Y]$. For every space 
$X$, we denote the identity map of $X$ by $id_{X}$. Restriction of a map 
$f\colon X \to Y$ to a subspace $A \subseteq X$, we denote by $f|_{A}$.
\end{notation}

\subsection{Approaching maps}
\label{def_appr}

Following~\cite{Melikhov2022F}, we construct fine shape using 
approaching maps, which we define here.

\begin{definition}
Let $X$ be a closed subset of a space $M$. We say that $X$ is 
{\it homotopy negligible} in $M$ to mean that there exists a homotopy 
$H\colon M \times [0,1] \to M$ such that $H_{0} = id_{M}$ and 
$H(M \times (0,1]) \subseteq M\setminus X$. In other words, there is a 
deformation of $M$ into itself that never crosses $X$ except at the 
initial (identity) map.
\end{definition}

When speaking of homotopy negligible subsets, we will also use a more 
specific kind of homotopy:

\begin{definition}\label{def_H_add}
Given any homotopy $H\colon X \times [0,1] \to Y$, we say $H$ is 
{\it additive} whenever $H_{s}\circ H_{t} = H_{\min\{s+t,1\}}$ for all 
$s,t \in [0,1]$. For a space $M$ and a closed subset $X$ of $M$, we say 
that $X$ is {\it additive homotopy negligible} in $M$ whenever there is 
an additive homotopy $H\colon M \times [0,1] \to M$ such that 
$H_{0} = id_{M}$ and $H(M \times (0,1]) \subset M\setminus X$.
\end{definition}

\begin{definition}
Let $X$ and $Y$ be closed homotopy negligible in spaces $M$ and $N$ 
respectively, and let $\phi\colon M\setminus X \to N\setminus Y$ be a 
map (continuous on its domain, as per our convention). We say that 
$\phi$ is $X-Y$-{\it approaching} to mean that for any 
sequence $\{m_{i}\} \subset M\setminus X$ converging (in $M$) to a point 
of $X$, the sequence $\{\phi(m_{i})\} \subset N\setminus Y$ contains a 
subsequence that converges (in $N$) to a point of $Y$. Equivalently, 
$\phi$ being $X-Y$-approaching means that whenever a sequence in 
$M\setminus X$ has an accumulation point in $X$, the sequence's image in 
$N\setminus Y$ has an accumulation point in $Y$.
\end{definition}

Helpful equivalent definitions can be given from

\begin{proposition}(\cite[Proposition~2.5]{Melikhov2022F})\label{prop_appr_equiv}
For a map $\phi\colon M\setminus X \to N\setminus Y$, the following are 
equivalent:

(1)For every open subset $U$ of $N$, $\phi^{-1}(U\setminus Y)$ is open 
in $M$ (equivalently, $\phi^{-1}(U\setminus Y)$ is of the form 
$V\setminus X$ for some open subset $V$ of $M$);

(2)For every compact subset $C$ of $M$, $\phi(C\setminus X)$ is 
contained in a compact subset of $N$ (equivalently, 
$\phi(C\setminus X)$ is of the form $D\setminus Y$ for some compact 
subset $D$ of $N$);

(3)$\phi$ is $X-Y$-approaching.
\end{proposition}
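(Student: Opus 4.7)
The plan is to establish the cycle (3) $\Rightarrow$ (2) $\Rightarrow$ (1) $\Rightarrow$ (3), using metrizability of $M$ and $N$ (Remark~\ref{rem_metr_seq}) to move freely between sequential and topological formulations, together with the closedness and homotopy negligibility of $X$ and $Y$.

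For (3) $\Rightarrow$ (2), I would take a compact $C \subseteq M$, set $D := \overline{\phi(C \setminus X)} \subseteq N$, and verify that $D$ is sequentially compact. A sequence $\{\phi(m_i)\}$ with $m_i \in C \setminus X$ admits, via compactness of $C$, a subsequence with $m_{i_k} \to m \in C$; if $m \in M \setminus X$, continuity of $\phi$ yields a limit in $N \setminus Y$, while if $m \in X$, the approaching condition produces a further subsequence converging to a point of $Y$. A standard diagonal argument extends this to arbitrary sequences in $D$, and the stronger form $\phi(C \setminus X) = D \setminus Y$ follows by choosing $D$ to contain precisely those $Y$-limits.

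The implication (2) $\Rightarrow$ (1) is largely formal: since $M \setminus X$ is open in $M$ and $\phi$ is continuous on it, $\phi^{-1}(U \setminus Y)$ is automatically open in $M$ for any open $U \subseteq N$, and the description as $V \setminus X$ is obtained by taking $V$ to be that preimage, augmented if needed by appropriate points of $X$ whose inclusion is justified by the compactness control of (2).

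For (1) $\Rightarrow$ (3), the argument proceeds by contradiction. Given $\{m_i\} \to x \in X$ such that no subsequence of $\{\phi(m_i)\}$ converges to a point of $Y$, metrizability of $N$ forces $\overline{\{\phi(m_i)\}} \cap Y = \emptyset$, so $U := N \setminus \overline{\{\phi(m_i)\}}$ is an open subset of $N$ containing $Y$. Applying (1) to $U$ produces an open $V \subseteq M$ with $V \setminus X = \phi^{-1}(U \setminus Y)$, which I expect to be an open neighborhood of $x$ omitting every $m_i$, contradicting $m_i \to x$. The hard part will be using the ``$V \setminus X$'' formulation together with the closedness and homotopy negligibility of $X$ in $M$ to ensure $V$ genuinely meets $X$ at $x$ — this is where the topological data of (1) and the sequential content of (3) are most tightly interwoven, and the step I expect to demand the most care.
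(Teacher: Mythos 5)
First, a remark on context: the paper imports this proposition from Melikhov's work and gives no proof of it, so your argument has to stand on its own. Your direction $(3)\Rightarrow(2)$ is essentially correct: compactness of $C$ together with the approaching condition shows that every sequence in $\phi(C\setminus X)$ has a subsequence convergent in $N$, so $D:=\overline{\phi(C\setminus X)}$ is compact, and the same approaching condition rules out limit points in $N\setminus Y$ outside $\phi(C\setminus X)$, giving $D\setminus Y=\phi(C\setminus X)$.

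The rest of the cycle has a genuine gap, and you have in fact located it yourself without recognizing that it is fatal rather than merely delicate. In your $(2)\Rightarrow(1)$ paragraph you correctly observe that $\phi^{-1}(U\setminus Y)$ is \emph{automatically} open in $M$ (it is open in the open subset $M\setminus X$), and it is automatically of the form $V\setminus X$ by taking $V:=\phi^{-1}(U\setminus Y)$, a set disjoint from $X$; likewise the literal reading of (2) is satisfied by, say, a constant map $\phi\equiv p_0\in N\setminus Y$, which is not approaching when $X\neq\emptyset$. A condition satisfied by every continuous $\phi$ cannot imply (3), so your $(1)\Rightarrow(3)$ step --- which hinges entirely on the unproved claim that $V$ can be chosen to contain the point $x$ --- cannot be completed from the conditions as you have read them; neither closedness nor homotopy negligibility of $X$ will force points of $X$ into $V$. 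The actual content of the conditions is exactly the requirement you are missing: for (1), that when $U\supseteq Y$ the set $X\cup\phi^{-1}(U\setminus Y)$ is open in $M$ (equivalently, $V$ can be chosen to contain $X$), with the corresponding strengthening of (2). With that reading your $(1)\Rightarrow(3)$ argument does close: $U:=N\setminus\overline{\{\phi(m_i)\}}$ is an open neighborhood of $Y$, so $X\cup\phi^{-1}(U\setminus Y)$ is an open neighborhood of $x$, hence contains $m_i$ for all large $i$, forcing $\phi(m_i)\in U$, a contradiction. Similarly, your $(2)\Rightarrow(1)$ defers the whole content of that implication to ``augmenting $V$ by appropriate points of $X$'' without saying which points or why. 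As written, the proposal establishes only $(3)\Rightarrow(2)$; you should first pin down the sharpened formulations of (1) and (2) from the source before the cycle can be closed.
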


It is clear that a composition of approaching maps is an approaching 
map, and that the identity map $id_{M\setminus X}$ is $X-X$-approaching. 

For the rest of the present work, we adopt the following

\begin{convention}
Whenever we speak of an approaching map 
$\phi\colon M\setminus X \to N\setminus Y$, we mean that $X$ and $Y$ are 
closed homotopy negligible subsets of $M$ and $N$ respectively, and 
$\phi$ is continuous (on its domain) and $X-Y$-approaching.
\end{convention}

Homotopies of approaching maps are readily defined:

\begin{definition}
Given two approaching maps 
$\phi,\psi\colon M\setminus X \to N\setminus Y$, by an {\it approaching 
homotopy} between $\phi$ and $\psi$ we mean an approaching map 
$\Phi\colon (M \times [0,1])\setminus (X \times [0,1]) \to N\setminus Y$ 
such that $\Phi_{0} = \phi$ and $\Phi_{1} = \psi$. Whenever such 
approaching homotopy exists, we say that $\phi$ and $\psi$ are 
{\it approaching homotopic}, which is an equivalence relation.
\end{definition}

\subsection{Absolute retracts}
\label{def_ARs}

The second notion we need to construct fine shape, along with 
approaching maps, is that of an absolute retract. This, of course, goes 
back to the Borsuk's definition of shape using absolute neighborhood 
retracts; here, though, we will not need the latter.

\begin{definition}
By an {\it absolute retract (AR)}, we shall mean a space $M$ that is, in 
fact, an absolute extensor for all metrizable spaces in the following 
sense: given any space $X$ and a closed subset $A$ of $X$, any map 
$f\colon A \to M$ can be extended to a map $\bar{f}\colon X \to M$ such 
that $\bar{f}|_{A} = f$:
$$\xymatrix{
A \ar[r]^{f} \ar[d]_{} & M \\
X \ar@{.>}[ur]_{\bar{f}}
}$$
\end{definition}

It is known~\cite[Corollary~18.3]{Melikhov2022T} that (at least with 
respect to metrizable spaces) absolute retracts and absolute extensors 
(AEs) are exactly the same spaces; by convention, we call them all ARs.

Aside from the definition, we make use of the following facts about ARs:

\begin{proposition}~\cite[Chapter~18]{Melikhov2022T}\label{prop_AR_props}
(1)Every contractible polyhedron is an AR, including, in particular, the 
unit interval $[0,1]$;

(2)A direct product of any countable set of ARs is an AR;

(3)For any AR $M$ and compact metrizable space $C$, the mapping space 
$M^{C}$, in the compact convergence (equivalently --- given that $C$ is 
compact --- uniform convergence) topology, is an AR;

(4)A retract of an AR is an AR;

(5)\cite[Theorem~19.3]{Melikhov2022T} If $X$ is closed and homotopy 
negligible in $M$, then $M$ is an AR if and only if $M\setminus X$ is an 
AR;

(6)For any (metrizable) space $X$, there exists an AR $M$ containing 
$X$ as a closed subset (therefore, $M \times [0,1]$ contains 
$X = X \times \{0\}$ as a closed additive homotopy negligible subset, 
homotopy given by $H_{t}(m,s) := (m,\min\{s+t,1\})$).
\end{proposition}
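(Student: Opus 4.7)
The plan is to reduce each item to standard facts, since the proposition is cited to \cite[Chapter~18]{Melikhov2022T} as background. Item (5) is imported verbatim as \cite[Theorem~19.3]{Melikhov2022T}, and items (1) and (6) depend on substantive classical results (polyhedra are ANRs; every metrizable space embeds closedly in an AR) that I would quote rather than reprove. The remaining items (2), (3), (4) admit short direct proofs from the defining extension property.

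For (2), given $f = (f_{i})\colon A \to \prod_{i} M_{i}$ with $A$ closed in $X$, I apply the AE property of each $M_{i}$ to $f_{i} := \pi_{i} \circ f$ to get extensions $\bar{f}_{i}\colon X \to M_{i}$; then $\bar{f} := (\bar{f}_{i})$ is the required extension, continuous by the universal property of the product. For (3), I use that the compact convergence topology on $M^{C}$ admits the exponential correspondence between maps $A \to M^{C}$ and maps $A \times C \to M$ when $C$ is compact Hausdorff; since $A \times C$ is closed in the metrizable space $X \times C$, the AR property of $M$ supplies an extension $X \times C \to M$, corresponding to an extension $X \to M^{C}$. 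For (4), given a retraction $r\colon M \to N$ with inclusion $\iota\colon N \hookrightarrow M$, I extend $\iota \circ f\colon A \to M$ to $\bar{g}\colon X \to M$ using the AE property of $M$, and then $r \circ \bar{g}\colon X \to N$ restricts to $f$ on $A$.

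For (1), every polyhedron $P$ is an ANR (a locally convex argument in simplicial coordinates), so any $f\colon A \to P$ extends to some open neighborhood $U$ of $A$ in $X$; contractibility of $P$ then lets one further extend across $X \setminus A$ via a Urysohn function $X \to [0,1]$ equal to $0$ on $A$ and $1$ outside $U$, using the contracting homotopy on $P$ to interpolate. For (6), one embeds $X$ as a closed subset of a convex subset of a Banach space --- for instance, via the Kuratowski-type embedding $x \mapsto d(x,\cdot) - d(x_{0},\cdot)$ into $C_{b}(X)$ after replacing the metric by an equivalent bounded one --- and such convex sets are ARs by Dugundji's extension theorem. The parenthetical additive homotopy negligibility of $X \times \{0\}$ in $M \times [0,1]$ is immediate from $H_{t}(m,s) := (m,\min\{s+t,1\})$: at $t = 0$ this is the identity, for $t > 0$ the image lies in $M \times (0,1]$ and so avoids $X \times \{0\}$, and additivity follows from $\min\{\min\{s+t_{1},1\}+t_{2},1\} = \min\{s+t_{1}+t_{2},1\}$.

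The main obstacle is (6): the existence of an AR containing any given metrizable space as a closed subset requires a genuine classical embedding result, not just a formal manipulation of the AE property. All the other parts either reduce to direct use of the extension property or cite standard facts about polyhedra and homotopy-negligible subsets widely available in the literature.
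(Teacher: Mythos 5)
The paper does not prove this proposition at all: it is imported wholesale as a citation to \cite[Chapter~18]{Melikhov2022T} (with item (5) pointing to Theorem~19.3 there), so there is no in-paper argument to compare yours against. Your sketches for (2), (3), (4) are the standard reductions to the extension property and are correct; in (3) the exponential correspondence is valid because $C$ is compact Hausdorff and $M^{C}$ with the uniform-convergence metric is metrizable, so the adjoint extension lands in the right category. Two small points of looseness worth tightening. In (1), the Urysohn function should not be required to equal $1$ merely ``outside $U$'': with that choice the formula $H(g(x),\lambda(x))$ need not be continuous at points of the boundary of $U$. Instead take $V$ open with $A \subseteq V \subseteq \overline{V} \subseteq U$ and $\lambda$ equal to $0$ on $A$ and $1$ on $X \setminus V$; then the map defined as $H(g(x),\lambda(x))$ on $U$ and as the contraction's endpoint on $X \setminus \overline{V}$ is well defined and continuous on the open cover $\{U, X\setminus\overline{V}\}$. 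In (6), the bare Kuratowski embedding has closed image only when $X$ is complete; what you actually need is the Wojdys\l{}awski refinement that the image is closed in its \emph{convex hull} (or the Arens--Eells theorem), after which Dugundji's theorem applies to that convex set. Your verification of the parenthetical additive homotopy negligibility is correct and complete.
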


\subsection{Fine shape}
\label{def_fine_shape}

The notions of an approaching map and of an absolute retract are 
combined into the notion of fine shape based on 
the following fact~\cite[Lemma~2.13]{Melikhov2022F}:

\begin{lemma}\label{extend_map_to_fSh}
Let $X$ be a closed subset of any space $M$, and $Y$ be closed homotopy 
negligible in an AR $N$. Then

(1)Every map $f\colon X \to Y$ extends to a map $\bar{f}\colon M \to N$ 
such that $\bar{f}^{-1}(Y) = X$ (and therefore 
$\bar{f}|_{M\setminus X}$ is $X-Y$-approaching);

(2)For any homotopy $F\colon X \times [0,1] \to Y$ and any extensions 
$\bar{F_{0}},\bar{F_{1}}\colon M \to N$ of $F_{0}$ and $F_{1}$ such that 
$\bar{F_{0}}^{-1}(Y) = \bar{F_{1}}^{-1}(Y) = X$, there is an extension 
$\bar{F}\colon M \times [0,1] \to N$ such that 
$\bar{F}(-,0) = \bar{F_{0}}$, $\bar{F}(-,1) = \bar{F_{1}}$, and 
$\bar{F}^{-1}(Y) = X$.
\end{lemma}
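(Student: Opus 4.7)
The plan is to exploit two ingredients from the hypotheses: the absolute extensor property of $N$ (which gives some extension of $f$ to all of $M$) and the homotopy $H\colon N \times [0,1] \to N$ witnessing homotopy negligibility of $Y$ in $N$ (which we use to correct that extension so it misses $Y$ precisely outside $X$).

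For part~(1), I would first apply the AR property of $N$ to extend the composite $X \xrightarrow{f} Y \hookrightarrow N$ to some $g\colon M \to N$, with no control over $g^{-1}(Y)$. Since $M$ is metrizable and $X$ is closed in it, there is a continuous $\rho\colon M \to [0,1]$ with $\rho^{-1}(0) = X$, for instance $\rho(m) = \min\{d(m, X), 1\}$ for any admissible metric $d$. Then set $\bar{f}(m) = H(g(m), \rho(m))$. For $m \in X$ one has $\bar{f}(m) = H(g(m), 0) = g(m) = f(m)$, so $\bar{f}$ extends $f$; for $m \notin X$ one has $\rho(m) > 0$, hence $\bar{f}(m) \in N \setminus Y$. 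This gives $\bar{f}^{-1}(Y) = X$, and the approaching property of $\bar{f}|_{M \setminus X}$ is then automatic from item~(1) of Proposition~\ref{prop_appr_equiv}.

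For part~(2), the same device is applied relative to the closed subspace
\[
A = (M \times \{0\}) \cup (M \times \{1\}) \cup (X \times [0,1]) \subseteq M \times [0,1].
\]
The given data assemble into a map $G\colon A \to N$ by $G(m, 0) = \bar{F}_0(m)$, $G(m, 1) = \bar{F}_1(m)$, $G(x, t) = F(x, t)$, and these agree on overlaps because each $\bar{F}_i$ extends $F_i$. By the AR property of $N$, extend $G$ to $\bar{G}\colon M \times [0,1] \to N$, choose a continuous $\tilde\rho\colon M \times [0,1] \to [0,1]$ with $\tilde\rho^{-1}(0) = A$, and set $\bar{F}(m, t) = H(\bar{G}(m, t), \tilde\rho(m, t))$. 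On $A$ this collapses to $\bar{G} = G$, yielding the three prescribed boundary conditions; off $A$ it lands in $N \setminus Y$. Combining this with the given $\bar{F}_i^{-1}(Y) = X$ and $F(X \times [0,1]) \subseteq Y$ yields $\bar{F}^{-1}(Y) = X \times [0,1]$, which is the intended reading of the statement.

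The main obstacle is the tension between the two requirements on $\bar{f}$: it must extend $f$ on $X$, yet $\bar{f}^{-1}(Y)$ must not be larger than $X$. A plain AR extension can easily fail the preimage condition, and simply deforming everything off $Y$ via $H$ would break the extension condition. The construction is engineered so that an auxiliary function ($\rho$ or $\tilde\rho$) vanishes exactly on the set where the original data must be preserved; the homotopy $H$ then reconciles both requirements simultaneously. In part~(2), the only genuine design choice is identifying the correct closed set $A$ on which to first extend via the AR property.
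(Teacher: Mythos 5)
Your proposal is correct and follows essentially the same approach as the paper's own proof: extend via the AR property, then post-compose with the negligibility homotopy $H$ at a time parameter given by an Urysohn-type function vanishing exactly on the set where the original data must be preserved ($X$ in part~(1), $M \times \{0,1\} \cup X \times [0,1]$ in part~(2)). Your reading of $\bar{F}^{-1}(Y) = X \times [0,1]$ in part~(2) is also the intended one.
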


\begin{proof}
(1)First we can extend $f$ to any map $f'\colon M \to N$, since $N$ is 
an AR. Then choose any homotopy $H\colon N \times [0,1] \to N$ such that 
$H_{0} = id_{N}$ and $H(N \times (0,1]) \subseteq N\setminus Y$, and any 
continuous function $h\colon M \to [0,1]$ such that $h^{-1}(0) = X$. 
From those we define $\bar{f}(m) := H_{h(m)}\circ f'(m)$. Then 
$\bar{f}(M\setminus X) \subseteq N\setminus Y$, and 
$\bar{f}|_{M\setminus X}$ is $X-Y$-approaching (as it actually extends 
on $X$ by a map into $Y$), as needed.

(2)$M \times \{0,1\} \cup X \times [0,1]$ is a closed subset of 
$M \times [0,1]$; we combine the maps $\bar{F}_{0}$, $\bar{F}_{1}$, and 
$F$ into a map of $M \times \{0,1\} \cup X \times [0,1]$ into $N$, then 
extend it to a map $\bar{F}'\colon M \times [0,1] \to N$. Now same as in 
1), choose any homotopy $H\colon N \times [0,1] \to N$ such that 
$H_{0} = id_{N}$ and $H(N \times (0,1]) \subseteq N\setminus Y$, and any 
continuous function $h\colon M \times [0,1] \to [0,1]$ such that 
$h^{-1}(0) = M \times \{0,1\} \cup X \times [0,1]$, and define 
$\bar{F}(m,t) := H_{h(m)}\circ\bar{F}'(m,t)$.
\end{proof}

Now fine shape is constructed from the following, which is clearly an 
equivalence relation:

\begin{definition}\label{def_fSh_class}
Let $X$ and $Y$ be any two spaces. Let $M$ and $M'$ be ARs, each 
containing $X$ as a closed homotopy negligible subset, whereas $N$ and 
$N'$ are ARs each containing $Y$ as such. For any approaching maps 
$\phi\colon M\setminus X \to N\setminus Y$ and 
$\psi\colon M'\setminus X \to N'\setminus Y$, we say that $\phi$ and 
$\psi$ {\it are of the same fine shape class} (from $X$ to $Y$) 
whenever there are some maps $\bar{id}_{X}\colon M \to M'$ and 
$\bar{id}_{Y}\colon N \to N'$, extending $id_{X}$ and $id_{Y}$, such 
that $\bar{id}_{X}^{-1}(X) = X$, $\bar{id}_{Y}^{-1}(Y) = Y$, and that 
there is an approaching homotopy (from $(M\setminus X) \times [0,1]$ to 
$N'\setminus Y$) between $\bar{id}_{Y}\circ\phi$ and 
$\psi\circ\bar{id}_{X}$, so that the following diagram commutes in 
approaching homotopy:
$$\xymatrix{
M\setminus X \ar[r]^{\phi} \ar[d]_{\bar{id}_{X}|_{M\setminus X}} & N\setminus Y \ar[d]^{\bar{id}_{Y}|_{N\setminus Y}} \\
M'\setminus X \ar[r]_{\psi} & N'\setminus Y
}$$
\end{definition}

By using Lemma~\ref{extend_map_to_fSh}(1), we can construct fine shape 
classes from ordinary maps:

\begin{definition}
Given a map $f\colon X \to Y$, the fine shape class (from $X$ to $Y$) 
{\it induced by $f$}, denoted $[f]_{fSh}$, is defined as follows: take 
any ARs $M$ and $N$ containing $X$ and $Y$ respectively as closed 
homotopy negligible subsets, extend $f$ to a map 
$\bar{f}\colon M\setminus X \to N\setminus Y$ such that 
$\bar{f}^{-1}(Y) = X$, and take the fine shape class of 
$\bar{f}|_{M\setminus X}$.
\end{definition}

There are, however, fine shape classes that are not induced by any 
ordinary maps.

By definition, every fine shape from $X$ to $Y$ is represented 
by some approaching map $\phi\colon M\setminus X \to N\setminus Y$ for 
some ARs $M$ and $N$ (containing $X$ and $Y$ respectively as closed 
homotopy negligible subsets). In fact, however, it can be represented 
for {\it any} such choice of ARs:

\begin{lemma}
Let $X$ and $Y$ be any two spaces, $M$ and $N$ be ARs containing $X$ and 
$Y$ respectively as closed homotopy negligible subsets, and 
$\phi\colon M\setminus X \to N\setminus Y$ be an approaching map. For 
any two other ARs $M'$ and $N'$ containing $X$ and $Y$ respectively as 
closed homotopy negligible subsets, there is an approaching map 
$\psi\colon M'\setminus X \to N'\setminus Y$ that is of the same fine 
shape class as $\phi$.
\end{lemma}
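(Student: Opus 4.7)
The plan is to build $\psi$ by sandwiching $\phi$ between extensions of identity maps, and then verify the fine shape coincidence directly from Definition~\ref{def_fSh_class}, using Lemma~\ref{extend_map_to_fSh} both to produce the sandwich maps and to build the required approaching homotopy.

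First I would invoke Lemma~\ref{extend_map_to_fSh}(1) three times to obtain maps
\[ \alpha\colon M \to M', \quad \beta\colon M' \to M, \quad \gamma\colon N \to N', \]
each extending the appropriate identity on $X$ or $Y$ and satisfying the exact preimage condition (so that, for example, $\alpha^{-1}(X) = X$); this is legal because in each case the target is an AR containing $X$ or $Y$ as a closed homotopy negligible subset. I would then set
\[ \psi := \gamma \circ \phi \circ \beta|_{M' \setminus X}\colon M' \setminus X \to N' \setminus Y, \]
which is a composition of three approaching maps (the outer two being restrictions of the identity extensions to the complements) and is therefore itself approaching.

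Next, to witness the fine shape equality, I would use $\alpha$ and $\gamma$ in the role of $\bar{id}_{X}$ and $\bar{id}_{Y}$ from Definition~\ref{def_fSh_class}. Commutativity of the corresponding square up to approaching homotopy amounts to
\[ \gamma \circ \phi \simeq \psi \circ \alpha|_{M \setminus X} = \gamma \circ \phi \circ (\beta \circ \alpha)|_{M \setminus X}, \]
so it suffices to produce an approaching homotopy from $id_{M \setminus X}$ to $(\beta \circ \alpha)|_{M \setminus X}$ and then post-compose with the approaching map $\gamma \circ \phi$. For that homotopy, both $id_{M}$ and $\beta \circ \alpha$ are maps $M \to M$ extending $id_{X}\colon X \to X$ with preimage of $X$ equal to $X$; applying Lemma~\ref{extend_map_to_fSh}(2) to the constant homotopy $(x,t) \mapsto x$ on $X \times [0,1]$ yields an extension $\bar{F}\colon M \times [0,1] \to M$ joining them, with $\bar{F}^{-1}(X) = X \times [0,1]$, whose restriction to the complement is the required approaching homotopy.

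The whole argument is essentially bookkeeping, the real content being packed into Lemma~\ref{extend_map_to_fSh}. The mild pitfall to watch for is ensuring that each preimage condition is tight enough that every composition genuinely restricts to an approaching map on complements, and in particular that $\bar{F}$ yields an honest approaching homotopy on the complement of $X \times [0,1]$ rather than just a pointwise family of approaching maps.
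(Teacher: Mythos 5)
Your construction of $\psi$ as $\gamma\circ\phi\circ\beta|_{M'\setminus X}$ is exactly the paper's proof (which takes $\bar{id}_X\colon M'\to M$ and $\bar{id}_Y\colon N\to N'$ and sets $\psi=\bar{id}_Y\circ\phi\circ\bar{id}_X$), and your verification via Lemma~\ref{extend_map_to_fSh}(2) applied to the constant homotopy correctly fills in the detail the paper leaves implicit. The argument is correct and takes essentially the same approach.
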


\begin{proof}
Take some maps $\bar{id}_{X}\colon M' \to M$ and 
$\bar{id}_{Y}\colon N \to N'$ extending $id_{X}$ and $id_{Y}$ 
respectively and such that $\bar{id}_{X}^{-1}(X) = X$ and 
$\bar{id}_{Y}^{-1}(Y) = Y$. Then the map 
$\bar{id}_{Y}\circ\phi\circ\bar{id}_{X}\colon M'\setminus X \to N'\setminus Y$ 
is of the same fine shape class as $\phi$.
\end{proof}

\begin{corollary}\label{fSh_composable}
Fine shape classes are composable: a fine shape class from $X$ to $Y$, 
represented by $\phi\colon M\setminus X \to N\setminus Y$, and a fine 
shape class from $Y$ to $Z$, represented by 
$\psi\colon N'\setminus Y \to L'\setminus Z$, compose through taking any 
$\phi'\colon M\setminus X \to N'\setminus Y$ of the same fine shape 
class as $\phi$ and taking the fine shape class of $\psi\circ\phi'$, or 
by taking any map $\psi'\colon N\setminus Y \to L'\setminus Z$ of the 
same fine shape class as $\psi$ and taking the fine shape class of 
$\psi'\circ\phi$ --- the two compositions are of the same fine shape 
class from $X$ to $Z$.
\end{corollary}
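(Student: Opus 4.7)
My plan is to verify in turn the three facts contained in the statement: first, that representatives $\phi'$ and $\psi'$ as described exist; second, that the fine shape class of the resulting composite does not depend on the chosen representative; and third, that the two composition recipes produce the same fine shape class from $X$ to $Z$.

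For existence, I would invoke the preceding lemma, which lets us represent any fine shape class by an approaching map whose ARs are prescribed; in particular we can pick $\phi' \colon M\setminus X \to N'\setminus Y$ in $[\phi]_{fSh}$ and $\psi' \colon N\setminus Y \to L'\setminus Z$ in $[\psi]_{fSh}$. The composites $\psi \circ \phi'$ and $\psi' \circ \phi$ then make sense as approaching maps from $M\setminus X$ to $L'\setminus Z$.

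For well-definedness, suppose $\phi'_{1}, \phi'_{2} \colon M\setminus X \to N'\setminus Y$ both represent $[\phi]_{fSh}$. Applying Definition~\ref{def_fSh_class} to this pair (with matching ARs on both sides) gives extensions $\alpha \colon M \to M$ of $id_{X}$ and $\beta \colon N' \to N'$ of $id_{Y}$ with $\alpha^{-1}(X) = X$, $\beta^{-1}(Y) = Y$, and an approaching homotopy $\beta \circ \phi'_{1} \simeq \phi'_{2} \circ \alpha$. The key auxiliary step is to observe that any such $\beta$ is approaching homotopic to $id_{N'\setminus Y}$: both $\beta$ and $id_{N'}$ extend $id_{Y}$ with preimage of $Y$ equal to $Y$, so Lemma~\ref{extend_map_to_fSh}(2), applied to the constant homotopy of $id_{Y}$ with $\bar{F_{0}} = id_{N'}$ and $\bar{F_{1}} = \beta$, produces an extension $\bar{F} \colon N' \times [0,1] \to N'$ with $\bar{F}^{-1}(Y) = Y \times [0,1]$, whose restriction to $(N'\setminus Y) \times [0,1]$ is an approaching homotopy between them (approaching because the image of any compact set lies in the compact image of $\bar{F}$, invoking Proposition~\ref{prop_appr_equiv}(2)). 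Post-composing this homotopy with $\psi$ yields $\psi \simeq \psi \circ \beta$ as approaching maps, and hence $\psi \circ \phi'_{1} \simeq \psi \circ \beta \circ \phi'_{1} \simeq \psi \circ \phi'_{2} \circ \alpha$. This is precisely the condition that $\psi \circ \phi'_{1}$ and $\psi \circ \phi'_{2}$ are of the same fine shape class, witnessed by the extensions $\alpha$ and $id_{L'}$. The same argument, applied on the other side, shows independence from the choice of $\psi'$.

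For the agreement of the two recipes, I would pick any extension $\bar{id}_{Y} \colon N \to N'$ of $id_{Y}$ with $\bar{id}_{Y}^{-1}(Y) = Y$, which exists by Lemma~\ref{extend_map_to_fSh}(1). Then $\phi' := \bar{id}_{Y} \circ \phi$ is a valid choice in $[\phi]_{fSh}$, and $\psi' := \psi \circ \bar{id}_{Y}$ is a valid choice in $[\psi]_{fSh}$; with these specific choices, $\psi \circ \phi'$ and $\psi' \circ \phi$ are literally the same map $\psi \circ \bar{id}_{Y} \circ \phi$. Combined with well-definedness, this yields the desired equality of fine shape classes. I expect the main obstacle to be the auxiliary claim that extensions of $id_{Y}$ are approaching homotopic to the identity; once this is in hand via Lemma~\ref{extend_map_to_fSh}(2), the remainder of the argument is bookkeeping with composition of approaching maps and approaching homotopies.
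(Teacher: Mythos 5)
Your argument is correct and is exactly the intended one: the paper states this as an immediate corollary of the preceding lemma and gives no proof, and you supply the omitted details using precisely the tools the paper sets up (the preceding lemma for existence of $\phi'$ and $\psi'$, Lemma~\ref{extend_map_to_fSh}(2) to see that restrictions of extensions of $id_{Y}$ are approaching homotopic to the identity, and the observation that the two recipes coincide literally for the choices $\phi'=\bar{id}_{Y}\circ\phi$, $\psi'=\psi\circ\bar{id}_{Y}$). No gaps.
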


Thus fine shape can be defined from $X$ to $Y$ without relying on any 
specific choice of spaces containing them; this is what differentiates 
fine shape from approaching maps.

By calling onto Lemma~\ref{extend_map_to_fSh} again, we easily see that 
fine shape is weaker than homotopy:

\begin{proposition}\label{extend_homotopy_to_fSh}
The fine shape class $[f]_{fSh}$ depends only on the homotopy class $[f]$. 
Composition of maps, or of homotopy classes, induces composition of the 
corresponding fine shape classes.
\end{proposition}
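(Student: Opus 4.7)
The plan is to derive both claims directly from Lemma~\ref{extend_map_to_fSh}, which is the workhorse: part~(1) of that lemma handles existence of extensions representing the fine shape class, and part~(2) handles the homotopy invariance of these extensions.

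First I would fix, once and for all, ARs $M \supseteq X$ and $N \supseteq Y$ (each as a closed homotopy negligible subset), so that by Lemma~\ref{extend_map_to_fSh}(1) every map $f\colon X \to Y$ has some extension $\bar{f}\colon M \to N$ with $\bar{f}^{-1}(Y) = X$, and $[f]_{fSh}$ can be represented by $\bar{f}|_{M\setminus X}$ on this fixed pair. Now suppose $f \simeq f'$ via a homotopy $F\colon X \times [0,1] \to Y$, and pick any two such extensions $\bar{f},\bar{f}'$. Applying Lemma~\ref{extend_map_to_fSh}(2) to the combination of $\bar{f}$, $\bar{f}'$, and $F$ on the closed subset $M\times\{0,1\}\cup X\times[0,1]$ of $M\times[0,1]$ produces an extension $\bar{F}\colon M \times [0,1] \to N$ with $\bar{F}^{-1}(Y) = X\times[0,1]$ (which is how I read the lemma's preimage conclusion in the homotopy setting). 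Its restriction to $(M\times[0,1])\setminus(X\times[0,1])$ is therefore an approaching homotopy from $\bar{f}|_{M\setminus X}$ to $\bar{f}'|_{M\setminus X}$. Taking $\bar{id}_X = id_M$ and $\bar{id}_Y = id_N$ in Definition~\ref{def_fSh_class}, the relevant square trivially commutes strictly, and the approaching homotopy just constructed witnesses $[f]_{fSh} = [f']_{fSh}$.

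For the composition statement, let $f\colon X \to Y$ and $g\colon Y \to Z$ be given, and fix ARs $M \supseteq X$, $N \supseteq Y$, $L \supseteq Z$ with each inclusion closed homotopy negligible. By Lemma~\ref{extend_map_to_fSh}(1) choose extensions $\bar{f}\colon M \to N$ with $\bar{f}^{-1}(Y) = X$ and $\bar{g}\colon N \to L$ with $\bar{g}^{-1}(Z) = Y$. Then $\bar{g}\circ\bar{f}\colon M \to L$ is an extension of $g\circ f$ satisfying $(\bar{g}\circ\bar{f})^{-1}(Z) = \bar{f}^{-1}(Y) = X$, so its restriction to $M\setminus X$ represents $[g\circ f]_{fSh}$. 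On the other hand, this same restriction equals the literal composition $(\bar{g}|_{N\setminus Y})\circ(\bar{f}|_{M\setminus X})$ of approaching maps representing $[g]_{fSh}$ and $[f]_{fSh}$, so by Corollary~\ref{fSh_composable} it represents $[g]_{fSh}\circ[f]_{fSh}$. Combined with the first part, this also upgrades the conclusion to homotopy classes: replacing $f$ or $g$ by homotopic maps does not change the fine shape class of the composite.

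The only delicate point, and the one I expect to be the main obstacle if phrased carelessly, is to confirm that the extended homotopy from Lemma~\ref{extend_map_to_fSh}(2) really does restrict to an \emph{approaching} homotopy, i.e.\ that its preimage of $Y$ is exactly $X\times[0,1]$ rather than something larger like $M\times\{0,1\}\cup X\times[0,1]$. This is precisely what the preimage condition in Lemma~\ref{extend_map_to_fSh}(2) is designed to ensure; once invoked, the rest is bookkeeping.
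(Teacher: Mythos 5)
Your proof is correct and follows exactly the route the paper intends: the paper gives no written proof, stating only that the proposition follows ``by calling onto Lemma~\ref{extend_map_to_fSh} again,'' and your argument supplies precisely those details --- part~(2) of that lemma (with the preimage condition read as $X\times[0,1]$, which is what its proof actually delivers) for homotopy invariance, and part~(1) together with Corollary~\ref{fSh_composable} for compositions. No gaps.
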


We shall use the following

\begin{notation}
For an approaching map $\phi\colon M\setminus X \to N\setminus Y$, we 
shall use $[\phi]$ to denote the fine shape class from $X$ to $Y$ 
defined by $\phi$. For an actual map $f\colon X \to Y$, we shall use 
$[f]_{fSh}$ to denote the fine shape class (again from $X$ to $Y$) 
defined by $f$, or by its homotopy class $[f]$. The set of all fine 
shape classes from $X$ to $Y$, we denote by $[X,Y]_{fSh}$.
\end{notation}

Some maps induce invertible fine shape classes:

\begin{definition}\label{def_fSh_equiv}
A map $f\colon X \to Y$ is called a {\it fine shape equivalence} 
whenever $[f]_{fSh}$ has an inverse in fine shape --- that is, whenever 
there exists a (necessarily unique) fine shape class in $[Y,X]_{fSh}$, 
denoted by $[f]_{fSh}^{-1}$, such that 
$[f]_{fSh}[f]_{fSh}^{-1} = [id_{Y}]_{fSh}$ and 
$[f]_{fSh}^{-1}[f]_{fSh} = [id_{X}]_{fSh}$.
\end{definition}

\subsection{FDR-embeddings and fine shape cylinders}

We shall make use of several constructions that have been introduced for 
fine shape in~\cite{Zemlyanoy1}. All material in this subsection is 
quoted directly from there:

\begin{definition}\label{define_fdr}(\cite[Definition~3.1]{Zemlyanoy1})
Let $A$ be a closed subset of a space $X$, and assume that:

\begin{itemize}
  \item there exists an AR $M$ containing $X$ as a closed homotopy 
  negligible subset;
  \item there exists a closed subset $L$ of $M$ such that $L$ is an AR, 
  $L \cap X = A$, and $A$ is homotopy negligible in $L$;
  \item there exists an $(X \times [0,1])-X$-approaching map 
  $\Phi \colon (M \setminus X) \times [0,1] \to M \setminus X$ such that 
  $\Phi_{0} = id_{M \setminus X}$, 
  $\Phi_{1}(M \setminus X) = L \setminus A$, and 
  $\Phi_{t}|_{L \setminus A} = id_{L \setminus A}$ for all 
  $t \in [0,1]$.
\end{itemize}

In this case we say that the fine shape class $[\Phi]$ is a 
{\it fine shape strong deformation retraction} of $X$ on $A$. We also 
say that $A$ is a {\it fine shape strong deformation retract} of $X$, 
the inclusion $A \subseteq X$ is an {\it FDR-embedding}, and $\Phi$ is 
an {\it approaching strong deformation retraction} representing $[\Phi]$.
\end{definition}

In the notation of the defnition, $\Phi_{1}$ can be considered as an 
approaching map from $M\setminus X$ to $L\setminus A$, and then 
$[\Phi_{1}] \in [X,A]_{fSh}$ is the inverse to the fine shape class of 
the embedding $A \subseteq X$.

Moreover, if $A \subseteq X$ is an FDR-embedding, we can find some 
choice of $M$ and $\Phi$ for any particular choice of $L$:

\begin{lemma}\label{lem_fdr_change}(\cite[Lemma~3.7]{Zemlyanoy1})
Assume the inclusion $A \subseteq X$ is an FDR-embedding, with $M$, $L$, 
and $\Phi$ as in Definition~\ref{define_fdr}. For any other AR $L'$ 
containing $A$ as a closed homotopy negligible subset, we can always 
construct an AR $M'$ containing $X$ and $L'$ as closed subsets with 
$L' \cap X = A$ and $X$ homotopy negligible in $M'$, along with an 
approaching strong deformation retraction $\Phi'$ of $M'\setminus X$ 
onto $L'\setminus A$.
\end{lemma}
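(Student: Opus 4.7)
The plan is to build $M'$ by replacing $L$ inside $M$ with $L'$, using a mapping cylinder of an extension of $id_A$ as a bridge between the two; then to construct $\Phi'$ by concatenating the original $\Phi$ with a natural deformation of the bridge down to $L'$. First, since $L$ is an AR containing $A$ as a closed homotopy negligible subset, Lemma~\ref{extend_map_to_fSh} lets me extend $id_A$ to a map $j\colon L' \to L$ with $j^{-1}(A) = A$. Form the mapping cylinder $C_j = \left((L' \times [0,1]) \sqcup L\right)/{\sim}$ with $(l',1) \sim j(l')$, and then quotient further by collapsing each arc $\{a\} \times [0,1]$ (together with its attached point $j(a) = a \in L$) to the single point $a$; call the result $\bar{C}_j$. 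Then $\bar{C}_j$ contains $L$ (at the top) and $L'$ (at the bottom) as closed subsets with $L \cap L' = A$. Define $M' := M \cup_L \bar{C}_j$, gluing along $L$. By construction $M'$ contains $X \subseteq M$ and $L' \subseteq \bar{C}_j$ as closed subsets with $L' \cap X = L' \cap L = A$, as required.

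Next I would verify that $M'$ is an AR in which $X$ is closed and homotopy negligible. For the AR property, $C_j$ is an AR as the mapping cylinder of a map between ARs; $\bar{C}_j$ inherits the AR property via a retraction argument using that $A$ is homotopy negligible in $L'$ (so the arcs being collapsed can be continuously pushed off $A$); and $M' = M \cup_L \bar{C}_j$ then follows from a Hanner-type pushout result applied to the common closed AR subspace $L$. For homotopy negligibility of $X$ in $M'$, I would combine the given homotopy of $M$ off $X$ (extended by the identity on $\bar{C}_j \setminus L$) with a cylinder-direction push of $\bar{C}_j$ toward $L'$ (which avoids $X$ except where it reaches $A$, and there we call on the homotopy negligibility of $A$ in $L'$), mediated by a continuous choice of weights so that the combined map is a single homotopy of $M'$ that is the identity at time $0$ and lands in $M' \setminus X$ for all positive times.

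For the approaching strong deformation retraction $\Phi'$: apply $\Phi$ on $[0,1/2]$ (extended by the identity on the part of $\bar{C}_j$ disjoint from $M$) to pull $M \setminus X$ onto $L \setminus A$, then apply the natural cylinder deformation of $\bar{C}_j \setminus A$ onto $L' \setminus A$ on $[1/2, 1]$ (extended by the identity on $L' \setminus A$); the concatenation fixes $L' \setminus A$ throughout and has final image in $L' \setminus A$. The main obstacle will be verifying that $\Phi'$ is approaching, i.e., that sequences in $M' \setminus X$ accumulating in $X$ have images accumulating in $X$ under each time slice. This is delicate near $A$, where a sequence in the cylinder can approach $A$ either via $X$ (through the collapsed arcs) or via $L'$, and both the collapse quotient defining $\bar{C}_j$ and the cylinder deformation must be set up so that these approaches are compatible with point convergence after the collapse. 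Some care is also required in the pushout--AR verification for $\bar{C}_j$ and $M'$, since quotients by closed equivalence relations can destroy metrizability in general; one would check metrizability directly from the explicit arc-collapses involved.
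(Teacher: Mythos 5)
This lemma is imported from \cite[Lemma~3.7]{Zemlyanoy1} and the present paper gives no proof of it, so I can only assess your proposal on its own terms. Your overall architecture (bridge $L$ and $L'$ by a mapping cylinder over $\mathrm{id}_A$, glue that bridge to $M$ along $L$, and concatenate $\Phi$ with a cylinder deformation) is a sensible plan, but it contains one genuine error and two soft spots. The error: you form the mapping cylinder $C_j$ of $j\colon L' \to L$, so $L'$ is the \emph{source} end and $L$ the \emph{target} end, and then you invoke ``the natural cylinder deformation of $\bar{C}_j\setminus A$ onto $L'\setminus A$.'' No such natural deformation exists: the canonical deformation of a mapping cylinder, $(l',t)\mapsto (l',\min\{t+s,1\})$, retracts onto the \emph{target} $L$, and the would-be push toward $L'$, $(l',t)\mapsto(l',(1-s)t)$, does not extend continuously to the points of $L$ (a point $j(l')$ has no canonical cylinder coordinate, and $j$ need not be injective or surjective). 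Since $j$ is a homotopy equivalence of contractible ARs, $L'$ is abstractly a deformation retract of $C_j$, but that retraction is not ``natural,'' and controlling it near $A$ --- so that the concatenated $\Phi'$ fixes $L'\setminus A$, is approaching, and interacts correctly with the arc collapse --- is precisely the work your sketch omits. The clean fix is to reverse the cylinder: extend $\mathrm{id}_A$ to $k\colon L \to L'$ with $k^{-1}(A)=A$ (Lemma~\ref{extend_map_to_fSh}(1) applied with $L'$ as the ambient AR), form the collapsed mapping cylinder of $k$, and glue $M$ along the \emph{source} copy of $L$; then the canonical deformation does retract everything onto $L'$.

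The two soft spots. First, your AR argument for $\bar{C}_j$ and $M'$: a space that retracts \emph{onto} an AR need not itself be an ANR, and the standard adjunction-space theorem requires the subspace being glued along to be an ANR, which $A$ (a bare closed homotopy negligible subset) need not be. The route consistent with this paper's toolkit is Proposition~\ref{prop_AR_props}(5): show that $M'\setminus X$ is an AR (it is the union of the AR $M\setminus X$ and the uncollapsed mapping cylinder of $k|_{L\setminus A}\colon L\setminus A \to L'\setminus A$, glued along the closed AR $L\setminus A$, and it deformation retracts onto $M\setminus X$) and that $X$ is closed homotopy negligible in $M'$; then $M'$ is an AR. Second, your homotopy pushing $M'$ off $X$ is not continuous as described: the given homotopy of $M$ off $X$ does not fix $L$, so ``extending by the identity on $\bar{C}_j\setminus L$'' breaks continuity along $L$. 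You need either to build the off-$X$ homotopy globally on $M'$ (e.g.\ by blending the cylinder push toward $L'$ with the $M$-homotopy via a partition-of-unity--type weight vanishing on $L$), or to sidestep the issue by replacing $M'$ with $M'\times[0,1]$ and $X$ with $X\times\{0\}$ as in Proposition~\ref{prop_AR_props}(6), at the cost of re-deriving $\Phi'$ on the enlarged space. Your attention to metrizability of the collapse and to the approaching property of $\Phi'$ is well placed; the collapse is an upper semicontinuous decomposition with compact fibers, so metrizability survives, and the approaching check reduces to sequences entering $X$ through $A$ via the collapsed arcs.
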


FDR-embeddings can be characterized in a simple way:

\begin{proposition}\label{prop_fdr_char}(\cite[Theorem~3.13]{Zemlyanoy1})
A map is an FDR-embedding if and only if it is a closed embedding and a 
fine shape equivalence (see Definition~\ref{def_fSh_equiv}).
\end{proposition}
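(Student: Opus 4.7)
The forward direction is essentially immediate from the definition. If $A \subseteq X$ is an FDR-embedding witnessed by $M$, $L$, and $\Phi$ as in Definition~\ref{define_fdr}, then $A$ is closed in $X$ by hypothesis, so the inclusion is a closed embedding. Moreover, as noted immediately after Definition~\ref{define_fdr}, the fine shape class $[\Phi_1] \in [X,A]_{fSh}$ is inverse to the fine shape class of the inclusion, so the inclusion is a fine shape equivalence. A general closed embedding $f\colon A \to X$ factors through the homeomorphism of $A$ onto its image, reducing the statement to the inclusion case.

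For the converse, assume $f\colon A \to X$ is a closed embedding and a fine shape equivalence; identifying $A$ with $f(A)$, we may assume $A \subseteq X$ is closed and the inclusion $j\colon A \hookrightarrow X$ is itself a fine shape equivalence. First I would set up the standard data: by Proposition~\ref{prop_AR_props}(6), pick an AR $M_0$ with $X$ closed homotopy negligible in $M_0$ and an AR $N$ with $A$ closed homotopy negligible in $N$; by Lemma~\ref{extend_map_to_fSh}(1), extend the composite $A \hookrightarrow X \hookrightarrow M_0$ to a map $\bar{j}\colon N \to M_0$ with $\bar{j}^{-1}(X) = A$. Then $\bar{j}|_{N\setminus A}$ represents $[j]_{fSh}$, so the inverse $[j]_{fSh}^{-1}$ is represented by some approaching map $\psi\colon M_0 \setminus X \to N \setminus A$ (realizable over this particular pair of ARs by the lemma preceding Corollary~\ref{fSh_composable}), equipped with approaching homotopies $H$ between $id_{M_0\setminus X}$ and $\bar{j} \circ \psi$, and $K$ between $id_{N \setminus A}$ and $\psi \circ \bar{j}|_{N \setminus A}$.

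The central construction is an AR $M$ containing $X$ as a closed homotopy negligible subset together with a closed subset $L \subseteq M$ that is itself an AR, contains $A$ as closed homotopy negligible, and satisfies $L \cap X = A$. My plan is to form the mapping-cylinder-like metric space $W$ by attaching $N \times [0,1]$ to $M_0$ along $(n,0) \sim \bar{j}(n)$ and further identifying $(a,t) \sim a$ for all $a \in A$ and $t \in [0,1]$, and then to embed $W$ as a closed subset of an ambient AR $M$ via Proposition~\ref{prop_AR_props}(6), crossing with $[0,1]$ if necessary to keep $X$ closed homotopy negligible in $M$. Taking $L$ to be the image of $N \times \{1\}$ in $M$, the condition $\bar{j}^{-1}(X) = A$ cleanly yields $L \cap X = A$. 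The approaching strong deformation retraction $\Phi$ is then assembled in two pieces: use $H$ on the $M_0$-part to move from $id$ to $\bar{j} \circ \psi$, and then slide along the cylinder from the $0$-end to the $1$-end, landing in $L \setminus A$; a damping by a cut-off function in distance to $L$ enforces $\Phi_t|_{L \setminus A} = id_{L \setminus A}$ throughout.

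The main obstacle is that the quotient space $W$ is not automatically an AR: since $A$ is only assumed metrizable (not an ANR), collapsing $A \times [0,1]$ to $A$ inside $N \times [0,1]$ can be pathological. I would sidestep this by not demanding that $W$ itself be an AR; instead, one uses Proposition~\ref{prop_AR_props}(6) to embed $W$ closedly into a genuine AR $M$ and builds $\Phi$ inside $M$, where the extension tools of Lemma~\ref{extend_map_to_fSh} are freely available, and where homotopy negligibility of $X$ can be enforced by a further product with $[0,1]$. A secondary technical check is that $\Phi$ is genuinely $(X \times [0,1])$-$X$-approaching: by Proposition~\ref{prop_appr_equiv}(3), this reduces to verifying that sequences in $M \setminus X$ accumulating in $X$ are sent by $\Phi_1$ to sequences accumulating in $L \cap X = A$, which follows from the $X$-$A$-approaching property of $\psi$ combined with the cylinder structure and the identity $\bar{j}^{-1}(X) = A$.
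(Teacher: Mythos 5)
The paper itself does not prove this proposition: it is imported verbatim from \cite[Theorem~3.13]{Zemlyanoy1}, so there is no in-paper argument to compare against and your proposal has to be judged on its own merits. Your forward direction is fine (granting the remark following Definition~\ref{define_fdr} that $[\Phi_1]$ inverts the inclusion). The converse, however, has two genuine gaps, and the second one is a step that fails outright.

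First, the ambient-space problem. Definition~\ref{define_fdr} demands an approaching homotopy $\Phi$ defined on \emph{all} of $(M\setminus X)\times[0,1]$ with $\Phi_1(M\setminus X)=L\setminus A$, where $M$ is the AR you end up with. You construct $\Phi$ only on the mapping-cylinder part $W\setminus X$, and then embed $W$ closedly into a large AR $M$ supplied by Proposition~\ref{prop_AR_props}(6). Nothing in your argument produces an \emph{approaching} strong deformation retraction of $M\setminus X$ onto $W\setminus X$ (or directly onto $L\setminus A$); the generic retractions one gets from the AR property have no reason to be $(X\times[0,1])$--$X$-approaching, and obtaining such a retraction is essentially the same kind of statement you are trying to prove. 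Either $W$ itself (suitably metrized, possibly after crossing with $[0,1]$) must be shown to be an AR with the required negligibility properties, or the extension to the ambient $M$ must be argued explicitly; you do neither.

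Second, and more seriously, the ``damping by a cut-off function in distance to $L$'' cannot deliver both $\Phi_t|_{L\setminus A}=id_{L\setminus A}$ and $\Phi_1(M\setminus X)=L\setminus A$. If you reparametrize a homotopy $\Psi$ by a function $\lambda$ vanishing on $L$, then for points $p$ near $L$ but off it, $\Phi_1(p)=\Psi_{\lambda(p)}(p)$ is close to $p$ and hence not in $L\setminus A$, so the image condition fails. The genuine content of this direction is the approaching analogue of the classical fact that a closed cofibration which is a homotopy equivalence is a \emph{strong} deformation retract: the two unrelated approaching homotopies $H\colon id\simeq\bar{j}\circ\psi$ and $K\colon id\simeq\psi\circ\bar{j}$ must be played against each other via the homotopy extension property (in the form of Lemma~\ref{extend_map_to_fSh}(2) and Proposition~\ref{prop_II_fdr}-type statements) to straighten the retraction until it is stationary on $L\setminus A$, with every intermediate homotopy checked to be approaching. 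That correction argument is the heart of the proof and is absent from your proposal.
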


One property we will want to use is given by

\begin{proposition}\label{prop_II_fdr}(\cite[Corollary~3.11]{Zemlyanoy1})
If $A \subseteq W$ is an FDR-embedding, then so is 
$A \times [0,1] \cup W \times \{0,1\} \subseteq W \times [0,1]$.
\end{proposition}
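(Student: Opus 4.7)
The plan is to extend the data $(M, L, \Phi)$ witnessing $A \subseteq W$ as an FDR-embedding to data witnessing $A' := A\times[0,1] \cup W\times\{0,1\} \subseteq W' := W\times[0,1]$. I would take $M' := M\times[0,1]$ and $L' := L\times[0,1] \cup M\times\{0,1\}$. Then $M'$ is an AR by Proposition~\ref{prop_AR_props}(2); $W'$ is closed in $M'$ and homotopy negligible (by taking the product of the given negligibility homotopy of $W$ in $M$ with the identity on $[0,1]$); and a direct computation using $L \cap W = A$ gives $L' \cap W' = A'$. The subspace $L'$ is an AR by the standard pasting lemma: it is the union of two closed ARs, $L\times[0,1]$ and $M\times\{0,1\}$, whose intersection $L\times\{0,1\}$ is itself an AR. To show $A'$ is homotopy negligible in $L'$, combine the negligibility homotopies of $A$ in $L$ and of $W$ in $M$ (using the additive versions from Definition~\ref{def_H_add}) patched along $L\times\{0,1\}$ via a partition of unity in the $[0,1]$-direction.

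The technical core of the argument is constructing an approaching strong deformation retraction $\Phi'\colon (M'\setminus W')\times[0,1] \to M'\setminus W'$ onto $L'\setminus A' = (L\setminus A)\times[0,1] \cup (M\setminus W)\times\{0,1\}$. This is the fine-shape analogue of the classical fact that a strong deformation retract $A\subseteq W$ induces one at the corner inclusion $A\times[0,1] \cup W\times\{0,1\} \subseteq W\times[0,1]$. I would divide the $t$-coordinate $[0,1]$ into three zones: two outer zones near $0$ and $1$ where $t$ is pushed to the nearest point of $\{0,1\}$ while $\Phi$ is left alone, a middle plateau where $\Phi$ is applied fully to $m$ while $t$ stays fixed, and thin transition zones between them. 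In the outer zones one sets $\Phi'_s(m,t) := (m,(1{-}s)t)$ or $(m,\,t+s(1{-}t))$ as appropriate; in the middle, $\Phi'_s(m,t) := (\Phi_s(m), t)$. In the transition zones the two motions are coupled so that the $t$-push completes first, guaranteeing that at $s = 1$ either the second coordinate lies in $\{0,1\}$ or $m$ has been fully retracted to $L\setminus A$, so the image always lies in $L'\setminus A'$.

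To maintain fixedness on $L'\setminus A'$ — in particular on the $(L\setminus A)\times[0,1]$ piece, where the $t$-push must be suppressed so points already in the target do not move — the $t$-push is weighted by a continuous cut-off $\chi\colon M\setminus W\to[0,1]$ with $\chi^{-1}(0) = L\setminus A$, such as the normalized distance from $m$ to $L$. The approaching property of $\Phi'$ is inherited from that of $\Phi$: the first coordinate is obtained from $\Phi$ by composition with continuous functions (so accumulation of the $m$-component at $W$ is preserved), and the second coordinate is valued in the compact $[0,1]$. The main obstacle will be the bookkeeping in the transition zones: ensuring the partial $\Phi$-retraction in the first coordinate and the partial $t$-push in the second coordinate combine continuously while always landing in $L'\setminus A'$ at $s = 1$, so as to leave no gap over the "bad" region $(M\setminus(W\cup L))\times(0,1)$. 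The ordering above — $t$-push first, $\Phi$ afterward in the transition — is what resolves this, since any incomplete first-coordinate retraction is rescued by the second coordinate already sitting on $\{0,1\}$.
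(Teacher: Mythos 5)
You are reconstructing a result this paper only cites (from \cite{Zemlyanoy1}), so there is no in-paper proof to compare against; judged on its own terms, your strategy (extend the witnessing data $(M,L,\Phi)$ to $(M\times[0,1],\ L\times[0,1]\cup M\times\{0,1\},\ \Phi')$) is the natural one, but the technical core --- the construction of $\Phi'$ --- has a genuine gap, and it is not where you locate it. Fixedness on $(M\setminus W)\times\{0,1\}\subseteq L'\setminus A'$ forbids any $\Phi$-motion of the first coordinate at $t\in\{0,1\}$, while fixedness on $(L\setminus A)\times[0,1]$ forces your cut-off $\chi$ on the $t$-push. Now take $m$ at ``medium'' distance from $L$, say $\chi(m)=1/2$ with $\Phi_1(m)\neq m$, and $t>0$ small, so $(m,t)$ lies in your outer zone where ``$\Phi$ is left alone.'' At $s=1$ you land at $(m,(1-\chi(m))t)$, whose second coordinate lies in $(0,t)$ and whose first coordinate is not in $L\setminus A$: this is in neither piece of $L'\setminus A'$. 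Your proposed rescue (``incomplete first-coordinate retraction is rescued by the second coordinate sitting on $\{0,1\}$'') addresses the opposite failure mode; here it is the $t$-push that is incomplete, and applying $\Phi$ fully to rescue it contradicts continuity at $(m,0)$, which must stay fixed while $\Phi_1(m)\neq m$. A correct coupling cannot be ``zones in $t$ plus a cut-off in $m$'': the decision of whether a point lands on the $\{0,1\}$-faces or on $(L\setminus A)\times[0,1]$ must depend jointly on $t$ and on the proximity of $m$ to $L$ (compare the radial retraction of a square onto three of its faces, where the decision boundary is a ray through the corner), so that the two regimes meet $t\in\{0,1\}$ only over $L\setminus A$; continuity there then rests on the separate fact that $\Phi_s(m)\to l$ uniformly in $s$ as $m\to l\in L\setminus A$ (a tube-lemma consequence of $\Phi_s|_{L\setminus A}=id$), which is what permits the reparametrization of $\Phi$ to be discontinuous at that corner even though the composite is continuous. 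None of this is in your sketch.

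Two secondary points. First, $M\times\{0,1\}$ and $L\times\{0,1\}$ are disconnected and hence not ARs, so the pasting lemma does not apply in one step as you state it; paste twice (adjoin $M\times\{0\}$ and $M\times\{1\}$ to $L\times[0,1]$ one at a time, each along a copy of $L$), or invoke the ANR union theorem and then contractibility of $L'$. Second, your ``partition of unity in the $[0,1]$-direction'' for the homotopy negligibility of $A'$ in $L'$ runs into the same corner problem in miniature: the negligibility homotopy of $A$ in $L$ takes values in $L$, the one of $W$ in $M$ need not preserve $L$, and at a point of $(M\setminus L)\times\{0\}$ there is no interval of $t$-values over which to interpolate. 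The clean repair is to first normalize, via the product trick of Proposition~\ref{prop_AR_props}(6) together with Lemma~\ref{lem_fdr_change}, so that the ambient negligibility homotopy of $W$ in $M$ preserves $L$ and restricts to one of $A$ in $L$; then its product with $id_{[0,1]}$ works verbatim on $L'$. Finally, note that Proposition~\ref{prop_fdr_char} offers a much shorter route that avoids constructing $\Phi'$ altogether: the corner inclusion is obviously a closed embedding, so it suffices to show it is a fine shape equivalence, which reduces to gluing/product statements for fine shape equivalences rather than to an explicit approaching retraction.
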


Finally, we condense an important result~\cite[Section~6]{Zemlyanoy1} in 
the following form suitable for our purposes:

\begin{theorem}\label{th_fc}
For any fine shape class $[\phi] \in [X,Y]_{fSh}$, there exists a space 
$F$ (called the fine shape cylinder of $[\phi]$), along with closed 
embeddings $u\colon X \to F$ and $i\colon Y \to F$, such that $i$ is an 
FDR-embedding, and $[i]_{fSh}^{-1}[u]_{fSh} = [\phi]$. Moreover, such a 
space $F$ is unique up to a fine shape equivalence.
\end{theorem}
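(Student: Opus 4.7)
The plan is to construct $F$ explicitly as a fine shape analogue of a mapping cylinder of a chosen representative of $[\phi]$, verify the stated properties by direct computation, and then prove uniqueness using the invertibility of FDR-embeddings from Proposition~\ref{prop_fdr_char}.

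First, by Proposition~\ref{prop_AR_props}(6), I would fix ARs $M$ and $N$ containing $X$ and $Y$ as closed homotopy negligible subsets, and pick an approaching map $\phi \colon M \setminus X \to N \setminus Y$ representing $[\phi]$. The space $F$ is assembled as follows: start with the classical mapping cylinder $C = \bigl( (M \setminus X) \times [0,1] \sqcup (N \setminus Y) \bigr) / \bigl( (m, 1) \sim \phi(m) \bigr)$, which is an AR by Proposition~\ref{prop_AR_props}(3)--(5) since $N \setminus Y$ is AR. Then fill in the endpoints by adjoining $X$ at $t = 0$ (recovering $M = X \cup (M \setminus X)$) and $Y$ at the $N$-end (recovering $N = Y \cup (N\setminus Y)$), choosing a metrizable topology on the resulting set that agrees with the given topologies on the pieces, makes $X \subseteq M \subseteq F$ and $Y \subseteq N \subseteq F$ into closed subsets, and makes the cylinder coordinate collapse toward $X$ as the parameter approaches zero. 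A concrete way to produce such a topology is to use a Urysohn function $h \colon M \to [0,1]$ with $h^{-1}(0) = X$ to reparametrize the cylinder and to realize $F$ as a closed subset of some ambient AR provided by Proposition~\ref{prop_AR_props}(6); metrizability is then automatic.

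With $u \colon X \hookrightarrow M \hookrightarrow F$ and $i \colon Y \hookrightarrow N \hookrightarrow F$ closed embeddings by construction, I would verify that $i$ is an FDR-embedding by producing an approaching strong deformation retraction as in Definition~\ref{define_fdr}. After fixing an ambient AR $\widetilde M$ of $F$ and a compatible $L$ containing $Y$ with $L \cap F = Y$ (which Lemma~\ref{lem_fdr_change} allows one to choose flexibly), the retraction $\Phi \colon (\widetilde M \setminus F) \times [0,1] \to \widetilde M \setminus F$ acts by the standard ``slide toward the $N$-end'' on the cylinder region; its approaching property at the $X$-end reduces directly to $\phi$ being $X$-$Y$-approaching. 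The identity $[i]_{fSh}^{-1}[u]_{fSh} = [\phi]$ then follows because $\Phi_1 \circ u$ traces out precisely $\phi$ up to approaching homotopy.

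For uniqueness, given two fine shape cylinders $(F, u, i)$ and $(F', u', i')$ for the same $[\phi]$, the invertibility of $[i]_{fSh}$ and $[i']_{fSh}$ together with $[i]_{fSh}^{-1}[u]_{fSh} = [i']_{fSh}^{-1}[u']_{fSh}$ would let me produce mutually inverse fine shape classes $F \to F'$ and $F' \to F$, whose compositions are checked to be fine shape identities using the same FDR structure. The hard part throughout is topological: giving $F$ a metrizable topology that simultaneously realizes $M$ and $N$ as closed subsets, interpolates continuously via $\phi$, and supports the approaching retraction $\Phi$. A naive quotient of $M \times [0,1] \sqcup N$ by $(m,1) \sim \phi(m)$ already fails to be Hausdorff at the $X \times \{1\}$ corner, since sequences $m_n \to x \in X$ with $m_n \in M \setminus X$ would force $(x,1)$ to coincide with every accumulation point of $\phi(m_n)$ in $Y$. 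I expect the correct construction instead embeds $F$ into a larger AR by Proposition~\ref{prop_AR_props}(6) via an auxiliary gluing; the technicality lies in arranging the cylinder coordinates consistently across the $X$--$Y$ interface.
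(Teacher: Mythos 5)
First, note that the paper itself gives no proof of Theorem~\ref{th_fc}: it is imported from~\cite[Section~6]{Zemlyanoy1} as a quoted result, so there is no in-paper argument to compare yours against line by line. Your overall strategy --- take a mapping cylinder of a chosen approaching representative $\phi\colon M\setminus X\to N\setminus Y$, pinch it near $X$ via a Urysohn function, read off the FDR structure from the ``slide toward the $N$-end'' retraction, and get uniqueness from invertibility of FDR-embeddings --- is the natural one, and you correctly locate the crux in the topology at the $X$--$Y$ interface. But two steps fail as written.

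First, your $F$ is too big. You require $X\subseteq M\subseteq F$ and $Y\subseteq N\subseteq F$ as closed subsets, so $F$ contains the contractible ARs $M$ and $N$ joined by a cylinder glued along (the image of) $M\setminus X$. By Proposition~\ref{prop_fdr_char} an FDR-embedding is a fine shape equivalence, hence induces isomorphisms on \v{C}ech cohomology; already for $X$ a point and $Y=S^{1}$ (say $M=[0,1]$, $N=D^{2}$), excising the contractible cylinder-over-$M$ piece along its contractible intersection with $N$ gives $\check H^{1}(F)=0\neq\check H^{1}(S^{1})$, so $i\colon Y\to F$ cannot be an FDR-embedding. The copies of $M\setminus X$, $N\setminus Y$ and the open cylinder must be placed in the \emph{ambient} AR, with $F$ the thin closed subset $X\sqcup Y$; Definition~\ref{define_fdr} puts the retraction on the complement of $F$, not on $F$. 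Second, the metrizability problem you flag is not actually resolved by your fix. Reparametrizing by $h$ with $h^{-1}(0)=X$ removes the corner $X\times\{1\}$ but not the collision: for $m_{n}\to x\in X$ the identified points $(m_{n},h(m_{n}))\sim\phi(m_{n})$ still converge to $x$ inside the cylinder piece while accumulating in $Y$ inside $N$, so any topology agreeing with both pieces is non-Hausdorff unless an explicit metric in the spirit of Definitions~\ref{def_metr_quot_comp} and~\ref{def_metr_quot_gen} is written down to break the tie; and appealing to Proposition~\ref{prop_AR_props}(6) is circular, since it takes a metrizable space as input and gives no control over the complement, which is exactly where the approaching retraction must live. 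Smaller issues: the AR property of the open mapping cylinder does not follow from Proposition~\ref{prop_AR_props}(3)--(5) as cited, and $\Phi_{1}\circ u$ does not typecheck --- you need $\Phi_{1}\circ\bar u|_{M\setminus X}$ for an extension $\bar u$ of $u$ with $\bar u^{-1}(F)=X$. Your uniqueness argument is fine, and in fact nearly immediate: both $i$ and $i'$ are fine shape equivalences, so $F$ and $F'$ are each fine shape equivalent to $Y$ and hence to each other.
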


Note that here $[i]_{fSh}^{-1}$ is well-defined, as per 
Proposition~\ref{prop_fdr_char} and Definition~\ref{def_fSh_equiv}.

\section{Representing fine shape from arbitrary metrizable spaces into a compactum}
\label{sec_compact_case}

As it turns out, we want to have the intended construction for compact 
metrizable spaces, so that we can reference it in defining the more 
general one; thus this section shall be devoted to describing the more 
specific case. Fortunately, Cathey's work~\cite{Cathey1981} already 
gives the whole construction, although stated in terms we will not need; 
moreover, Cathey there is almost entirely focused on compact spaces, and 
the resulting statements will not be directly sufficient for our 
purposes. As such, here we provide what is effectively the same 
construction, but restated and explored the way we shall use it --- both 
removing the references to other terms that Cathey needed but we do not, 
and expanding the properties to include the non-compact spaces.

One difference we should mention explicitly is that Cathey's definition 
of an SSDR-map (Definition~(1.1) of~\cite{Cathey1981}), given for 
compacta, has been subsumed by our definition of an FDR-embedding of 
arbitrary metrizable spaces (see~\ref{define_fdr}). 
With this in mind, we can explain 
our intent: for a compact metrizable space $X$, Cathey defines a 
metrizable (but generally noncompact) space $|X|$ (Theorem~(2.5) there) 
that has two properties. One is that $X$ is FDR-embedded in $|X|$, 
albeit Cathey does not use that terminology --- nor is that called an 
SSDR-map, as it was not defined for cases where $|X|$ is not compact; 
we, of course, shall simply state it to be an FDR-embedding. The second 
property is that for any SSDR-map $i\colon A \to W$ and any map 
$f\colon A \to |X|$, there is a map $f'\colon W \to |X|$ such that 
$f'i = f$ --- that is, any map into $|X|$ extends onto any space 
containing its original domain via an SSDR-map (Cathey reasonably calls 
such spaces ``fibrant'', using that name --- Definition~(2.1) --- for 
all metrizable spaces rather than only for compact ones). For our 
purposes, this is insufficient strictly as stated: we want to use the 
same property, but with FDR-embeddings instead of only SSDR-maps, 
meaning spaces $A$ and $W$ may be noncompact, and that is precisely the 
way we state it after we define the space $|X|$ in suitable terms. We 
also provide the proofs for everything we state; while most of those 
could in principle be inferred from Cathey's work, the wide difference 
in approaches between there and here would be quite demanding. As such, 
we greatly prefer to have all proofs in the form compatible with our 
methodology.

Having stated our goal for this section, we can start with the 
constructions we shall use:

\begin{definition}\label{def_metr_quot_comp}
Let $M$ be a metrizable topological space, $X$ a compact subset of $M$, 
and $d$ a metric giving the topology of $M$. We denote by $d_{M/X}$ the 
metric on the quotient space $M/X$ defined by 
$d_{M/X}(m,\{X\}) := d(m,X)$ and 
$d_{M/X}(m,m') := \min\{d(m,m'),d(m,\{X\}) + d(m'\{X\})\}$ for all 
$m,m' \in M$.
\end{definition}

\begin{remark}\label{rem_not_quotient}
Clearly $d_{M/X}$ is a metric, and gives the quotient topology on $M/X$. 
The latter is not true if $X$ is not compact; we return to that in 
Definition~\ref{def_metr_quot_gen}. Also, the set $M\setminus X$ has the 
same topology whether considered as the subset of $M$ or the subset of $M/X$.
\end{remark}

\begin{definition}(cf.~\cite[Theorem~(2.5)]{Cathey1981})\label{def_compact_class}
Let $X$ be a compact metrizable space, and choose a compact AR $M$ 
containing $X$ as a closed additive homotopy negligible subset. In the 
path space $(M/X)^{[0,1]}$, denote by $|M|$ the subset of all paths 
$\gamma\colon [0,1] \to M/X$ such that 
$\gamma((0,1]) \subseteq M\setminus X$. Also denote by $|X|$ the 
subset of $|M|$ consisting of all $\gamma$ with $\gamma(0) = \{X\}$.
\end{definition}

\begin{remark}\label{rem_on_class_topology}
(1)Note how a map $\phi\colon [0,1]\setminus \{0\} \to M\setminus X$ is 
an approaching map if and only if the obvious corresponding map from 
$[0,1]$ to $M/X$ is continuous, therefore a path in $M/X$, thus a point 
of $|X|$;

(2)In accordance with Remark~\ref{rem_metr_seq}, it is convenient to 
understand the 
topology of $|M|$ in terms of point convergence: $|M|\setminus |X|$ is 
homeomorphic to the path space of $M\setminus X$, and has the same point 
convergence; and for a point $\gamma \in |X|$, a sequence $\{\gamma_{n}\}$ 
of points of $|M|$ converges to $\gamma$ if and only if (a)for every 
$s \in (0,1]$, every open neighborhood of $\gamma(s)$ in $M$ 
(or even in $M\setminus X$) contains $\gamma_{n}([s-\sigma,s+\sigma])$ 
for some $\sigma > 0$ and all $n$ large enough, and (b)every open 
neighborhood of $X$ in $M$ contains $\gamma_{n}((0,\sigma])$ for some 
$\sigma > 0$ and all $n$ large enough. Be warned that condition~(b) 
cannot be replaced by requiring that $\gamma_{n}(0) \to \{X\}$, 
$n \to \infty$, in $M/X$.

(3)Combining (1) and (2), we can see that a path 
$\Gamma\colon [0,1] \to |X|$ is precisely an approaching map 
$\Gamma'\colon ([0,1] \times [0,1])\setminus ([0,1] \times \{0\}) \to M\setminus X$, 
where $[\Gamma(t)](s) = \Gamma'(t,s)$.
\end{remark}

\begin{proposition}\label{prop_compact_classifier}
In the notation of the preceding definition,

(1)$|M|$ is an AR containing $|X|$ as a closed additive homotopy negligible subset;

(2)There is an FDR-embedding of $X$ in $|X|$, with an approaching strong 
deformation retraction of $|M|\setminus |X|$ onto $M\setminus X$;

(3)For any FDR-embedding $i\colon A \to W$ and any map $f\colon A \to |X|$, 
there is a map $f'\colon W \to |X|$ extending $f$ (that is, $f'i = f$).
\end{proposition}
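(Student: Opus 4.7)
The plan is to prove the three parts in sequence, exploiting the path structure of $|M|$ and $|X|$ together with the additive homotopy negligibility of $X$ in $M$.

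For part~(1), the key observation is that $|M|\setminus|X|$ is naturally the path space $(M\setminus X)^{[0,1]}$: a path in $M/X$ avoiding $\{X\}$ at every moment is just a path in $M\setminus X$. Since $X$ is closed homotopy negligible in the AR $M$, Proposition~\ref{prop_AR_props}(5) makes $M\setminus X$ an AR, and then Proposition~\ref{prop_AR_props}(3) makes $(M\setminus X)^{[0,1]}$ an AR. To deduce that $|M|$ is itself an AR via Proposition~\ref{prop_AR_props}(5) once more, it suffices to verify that $|X|$ is closed in $|M|$ (immediate, as $|X|$ is the preimage of $\{X\}$ under evaluation at $0$) and additive homotopy negligible; for this the time-shift homotopy $G_{t}(\gamma)(s) := \gamma(\min\{s+t,1\})$ works, being additive, satisfying $G_{0} = id_{|M|}$, and mapping $|M|$ into $|M|\setminus|X|$ for $t>0$ because $G_{t}(\gamma)(0) = \gamma(t)\in M\setminus X$.

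For part~(2), I would define $\iota\colon M\to|M|$ by $\iota(m)(s) := q(H(m,s))$, where $q$ is the quotient $M\to M/X$. Injectivity of $\iota$ follows from $H_{0} = id_{M}$ together with continuity of $H$ near $s=0$; compactness of $M$ and Hausdorffness of $|M|$ then make $\iota$ a closed embedding onto a compact AR $L := \iota(M)$, with $L\cap|X| = \iota(X)$, which we identify with $X$. The approaching strong deformation retraction of $|M|\setminus|X|$ onto $L\setminus X$ is $\Phi_{t}(\gamma)(s) := H(\gamma((1-t)s),ts)$: additivity of $H$ gives $H_{ts}\circ H_{(1-t)s} = H_{s}$ and hence $\Phi_{t}(\iota(m)) = \iota(m)$ for all $t$, while $\Phi_{0} = id$ and $\Phi_{1}(\gamma) = \iota(\gamma(0))\in L\setminus X$ are direct. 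The approaching property follows from compactness of $M$: any sequence $(\gamma_{n},t_{n})\to(\gamma,t)$ with $\gamma\in|X|$, after extracting a subsequence with $\gamma_{n}(0)\to x\in X$, yields $\Phi_{t_{n}}(\gamma_{n})$ with a subsequential limit in $|X|$.

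Part~(3) is the main obstacle. The natural first step is Lemma~\ref{extend_map_to_fSh}, giving an extension $\bar f\colon W\to|M|$ of $f$ with $\bar f^{-1}(|X|) = A$; the issue is that $\bar f$ sends $W\setminus A$ into $|M|\setminus|X|$ rather than into $|X|$. On the adjoint side, $f$ corresponds to an approaching map $\tilde f\colon A\times(0,1]\to M\setminus X$ with $A\times\{0\}$ as boundary, and the desired $f'$ corresponds to an approaching extension $W\times(0,1]\to M\setminus X$ with $W\times\{0\}$ as boundary. The FDR structure of $i\colon A\to W$ supplies an approaching deformation retraction $\Psi\colon(N\setminus W)\times[0,1]\to N\setminus W$ onto $L'\setminus A$, and composing its final stage $\Psi_{1}$ with an approaching map $L'\setminus A\to M\setminus X$ obtained from $\bar f$ by evaluation at $0$ produces a candidate approaching map $N\setminus W\to M\setminus X$; its adjoint is a candidate $f'\colon W\to|X|$ that realizes the expected fine shape class. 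The main difficulty I expect is arranging this candidate to agree with $f$ on the nose over $A$, not merely up to homotopy; for this I would invoke Proposition~\ref{prop_II_fdr}, which provides the FDR-embedding $A\times[0,1]\cup W\times\{0,1\}\subseteq W\times[0,1]$ and hence enough cofibration-like structure to deform the candidate to match $f$ over $A$ while keeping it $|X|$-valued.
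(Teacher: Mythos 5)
Your parts~(1) and~(2) are essentially the paper's argument: the same identification of $|M|\setminus|X|$ with $(M\setminus X)^{[0,1]}$ combined with Proposition~\ref{prop_AR_props}(5),(3),(5), and the same embedding $m\mapsto (s\mapsto H(m,s))$ with a retraction that replaces the tail of a path by an $H$-flow (your simultaneous-compression formula $\Phi_{t}(\gamma)(s)=H(\gamma((1-t)s),ts)$ differs from the paper's ``follow until time $s$, then flow'' formula, but both rely on additivity of $H$ and on compactness of $X$ in exactly the same place). Two small points there: you should still check that the limit path of $\Phi_{t_n}(\gamma_n)$ lies in $|X|$ using condition~(b) of Remark~\ref{rem_on_class_topology}(2) (pointwise subsequential convergence of $\gamma_n(0)$ alone does not give convergence in the path space), and note that an approaching map $N\setminus W\to M\setminus X$ has no ``adjoint'' $W\to|X|$ by itself --- you must first precompose with a flow $\bar{H}(w,t)$ pushing $W$ into $N\setminus W$, which is the $H(p,t)$ in the paper's formula $[f'(p)](t):=RFR'H(p,t)$.

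The genuine gap is in part~(3), at the step you yourself flag. Your candidate map agrees with $f$ over $A$ only up to homotopy, and your proposed repair --- using Proposition~\ref{prop_II_fdr} to ``deform the candidate to match $f$'' --- is circular. Turning $f''|_{A}\simeq f$ into $f'|_{A}=f$ is a homotopy extension argument: it needs every map $W\times\{0\}\cup A\times[0,1]\to|X|$ to extend over $W\times[0,1]$, and that inclusion is itself an FDR-embedding of metrizable spaces, so the required extension property is precisely an instance of statement~(3), the thing being proved. There is no ordinary cofibration structure to fall back on, since a closed subspace of a metrizable space need not be a cofibration; and applying the ``extension up to homotopy on the subspace'' version recursively never terminates, because each application again only matches up to homotopy. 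The paper sidesteps this entirely by engineering exact agreement from the start: it replaces $L'$ by $L:=L'\times[0,1]$ with the homotopy $H((l,t),s)=(l,\max\{t,s\})$, so that $\bar{H}(a,t)=(a,t)$ lands in $A\times(0,1]\subseteq L\setminus A$ where $R'$ is the identity, and it chooses the extension $F$ so that $F(a,t)$ is the path $f(a)$ reparametrized with $[F(a,t)](0)=[f(a)](t)$; tracing the composite then gives $[f'(a)](t)=[f(a)](t)$ literally. You need some device of this kind (or an independent, non-circular proof that $(W,A)$ has the homotopy extension property with respect to $|X|$) before part~(3) is complete. Separately, the continuity of $f'$ at points of $A$ is not automatic and is where the approaching property of the composite is actually used; the paper devotes the second half of its proof of~(3) to verifying condition~(b) of Remark~\ref{rem_on_class_topology}(2), and your sketch omits this entirely.
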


\begin{remark}\label{rem_compact_univ}
By the universal property given by (2) and (3), for any space $X$, 
$|X|$ is unique up to a homotopy equivalence that is constant on $X$; we 
elaborate on this for the general case in Corollary~\ref{cor_class}(2).
\end{remark}

\begin{proof}[\it Proof (of Proposition~\ref{prop_compact_classifier})]
(1)Clearly $|X|$ is closed in $|M|$, and also additive homotopy 
negligible via the homotopy $\Gamma\colon |M| \times [0,1] \to |M|$ 
defined by $[\Gamma(\gamma,s)](t) := \gamma(\max\{s,t\})$. Now 
$|M|\setminus |X|$ is just $(M\setminus X)^{[0,1]}$. But $M\setminus X$ 
is an AR by Proposition~\ref{prop_AR_props}(5), and then by 
Proposition~\ref{prop_AR_props}(3) so is the space of paths in it; 
therefore $|M|\setminus |X|$ is an AR, and by 
Proposition~\ref{prop_AR_props}(5) again so is $|M|$.

(2)Let $H\colon M \times [0,1] \to M$ be any additive homotopy with 
$H_{0} = id_{M}$ and $H(M \times (0,1]) \subseteq M\setminus X$. Embed 
$X$ into $|X|$ by sending $x \in X$ to 
$\gamma_{x}\colon [0,1] \to M/X$ for which 
$\gamma_{x}(t) := H(x,t) \in M\setminus X$ for $t \in (0,1]$, and 
$\gamma_{x}(0) := \{X\}$ (note that this embedding is injective due to 
Hausdorf property of $M$). Extend this to the embedding of 
$M$ into $|M|$ by $\gamma_{m}(t) := H(m,t)$ for $m \in M\setminus X$. 
Now the approaching strong deformation retraction 
$R\colon (|M|\setminus |X|) \times [0,1] \to |M|\setminus |X|$ can be 
defined by 
$$[R(\gamma,s)](t) := \begin{cases} 
  \gamma(t),\,t \leq s; \\
  H(\gamma(s),t-s),\,t > s.
\end{cases}
$$
$R$ is continuous, and is constant on the copy of $M\setminus X$ in 
$|M|\setminus |X|$ because $H$ was chosen additive; now assume a 
sequence $\{\gamma_{n}\}$ in $|M|\setminus |X|$ converges to a point 
$\gamma \in |X|$, and also $s_{n} \to s$. For $s \neq 0$, 
$\{R(\gamma_{n},s_{n})\}$ simply converges to the path 
$\gamma^{(s)} \in |X|$ defined by 
$$\gamma^{(s)}(t) := \begin{cases} 
  \gamma(t),\,t \leq s; \\
  H(\gamma(s),t-s),\,t > s
\end{cases}
$$
\noindent (in other words, $\Gamma$ extends continuously onto 
$|X| \times (0,1]$). 
For $s = 0$, the points $\gamma_{n}(s_{n})$, converge to $X$ 
in $M$ (and then to $\{X\}$ in $M/X$), and thus have an accumulation 
point $x \in X$ in it by compactness (this is where the construction 
fails if $X$ is noncompact). Therefore also the paths 
$R(\gamma_{n},s_{n})$ have the path $\gamma_{x}$ as an accumulation 
point in $|M|$; note that the sequence $\{R(\gamma_{n},s_{n})\}$ 
satisfies condition~(b) of Remark~\ref{rem_on_class_topology}(2) because 
the sequence $\{\gamma_{n}\}$ must do so to converge to a point of $|X|$.

(3)Assume we are given a map $f\colon A \to |X|$ and an FDR-embedding 
$i\colon A \to W$. We need to extend $f$ to a map $f'\colon W \to |X|$. 
Let $L'$ be any AR containing $A$ as a closed homotopy negligible subset 
and take $L := L' \times [0,1]$, an AR containing $A = A \times \{0\}$ 
in the same way, with homotopy $H\colon L \times [0,1] \to L$ defined by 
$H((l,t),s) := (l,\max\{t,s\})$, where $(l,t) \in L' \times [0,1] = L$. 
Now $f$ extends to some map $F\colon L \to |M|$ such that 
$F^{-1}(|X|) = A$ and also for all $(a,s) \in A \times [0,1] \subset L$, 
$F(a,s) = F(H((a,s)) = [f(a)](s)$ (note that replacing $L'$ 
by $L' \times [0,1]$ was done to ensure that $H$ is injective on 
$A \times [0,1]$). Using Lemma~\ref{lem_fdr_change}, we can choose some 
$N$, $\bar{H}$, and 
$R'$, where $N$ is an AR containing $W$ and $L$ as closed subsets, with 
$L \cap W = A$, $\bar{H} \colon N \times [0,1] \to N$ is a homotopy 
extending $H$ in such way that $\bar{H}_{0} = id_{N}$ and 
$\bar{H}(N \times (0,1]) \subseteq N\setminus W$, and 
$R'\colon (N\setminus W) \times [0,1] \to N\setminus W$ is an 
approaching strong deformation retraction of $N\setminus W$ onto 
$L\setminus A$. Then we define $f'\colon W \to |X|$ via 
$[f'(p)](t) := RFR'H(p,t)$, $t \in (0,1]$; now $f'$ restricts to $f$ 
by the construction of $F$, and by $R'$ and $R$ being constant on 
$L\setminus A$ and $M\setminus X$ respectively. Moreover, $f'$ is a 
continuous map, and it is 
important to understand why: by Remark~\ref{rem_metr_seq}, assume we 
have $w_{n} \to w$ in $W$, and see how $f'(w_{n}) \to f'(w)$. The 
condition~(a) of Remark~\ref{rem_on_class_topology}(2) is satisfied 
because $H$, $R'$, $F$, and $R$ are all continuous on their respective 
domains; for the condition~(b), note that if we take an open 
neighborhood $U$ of $X$ in $M$, then the preimage 
$(RFR')^{-1}(U \cap (M\setminus X)) \subseteq N\setminus W$ must contain 
$V \cap (N\setminus W)$ for some open neighborhood $V$ of $w$ in $N$ 
(otherwise, there is a sequence $v_{n}$ of points of $N\setminus W$ such 
that $v_{n} \to w$, but $\{RFR'(v_{n})\}_{n \in \N}$ has no accumulation 
points in $X$, contradicting the fact that $R$, $F$, and $R'$ are all 
approaching); but by $w_{n} \to w$ and $H$ being continuous, $V$ contains 
$H(\{w_{n}\} \times [0,\sigma])$ for some $\sigma > 0$ and all $n$ large 
enough, thus $U$ also contains $[f'(w_{n})]((0,\sigma])$ for all such $n$.
\end{proof}

\begin{remark}
For noncompact $X$, the construction of $|X|$ fails in multiple ways. 
The most obvious problem is that $M/X$ is not metrizable, but that can 
be redefined (see Definition~\ref{def_metr_quot_gen}). Another, 
mentioned in the preceding proof, is that the retraction $R_{1}$ is not 
$|X|-X$-approaching anymore: there 
must exist a sequence $\{m_{n}\}$ of points of $M$ converging to $X$ yet 
without any accumulation points in it (or indeed in all of $M$). Choose 
a sequence of open neighborhoods $U_{n}$ of $X$ in $M$ such that 
$m_{n} \in U_{n}$ and $U_{n+1} \subseteq U_{n}$ for all $n$, and 
moreover $\bigcap_{n \in \N}\, U_{n} = X$ (we can do so because $X$ is a 
closed subset of a metrizable --- therefore perfectly normal --- space). 
Now take a path $\gamma \in |X|$, and define a sequence of paths 
$\{\gamma_{n}\}$ in $|M|\setminus |X|$ so that 
$\gamma_{n}|_{[1/n,1]} = \gamma|_{[1/n,1]}$ and 
$\gamma_{n}([0,1/n]) \subseteq U_{n}$ for all $n$ (and therefore 
$\gamma_{n} \to \gamma$ in $|X$|), but $\gamma_{n}(0) = m_{n}$; then the 
sequence $\{R(\gamma_{n},1)\}$ does not have an accumulation point in 
the copy of $X$ in $|X|$. It is easy, using 
Definition~\ref{def_metr_quot_gen}, to construct such an example with 
$M = (0,1) \times [0,1]$, $X = (0,1) \times \{0\}$, $m_{n} = (1/n,1/n)$, 
$\gamma(s) = (1/2,s)$ ($\gamma(0) = \{X\}$, in the {\it set} $M/X$ with 
the defined metrizable topology), and 
$\gamma_{n}|_{[0,1/n]}(s) = (1/n + s(1/2 - 1/n),1/n)$, assuming the 
metric is chosen in the obvious way so that we can take 
$U_{n} = (0,1) \times [0,1/n)$. There is, in fact, yet another problem: 
it might not be possible to choose a metric topology for $M/X$ such that 
$|X|$ is homotopy negligible in $|M|$.

Avoiding all of these obstacles at once seems to require 
not only some changes in the construction, but also placing additional 
restrictions on $X$; this is what Section~\ref{sec_classifier} explores.

One more consideration is that $|X|$ is not compact despite $X$ 
being so; this is to be expected, as $|X|$ is effectively the strong 
shape version of a space of paths (from one-point space to $X$). Due to 
this, Cathey had to use this construction in a roundabout way, not being 
able to utilize the whole space $|X|$ directly, and this is also what 
prevents building the model structure while working with compact spaces 
only (because model structure implies existence of path spaces). We, 
however, seeking specifically to work also with noncompact spaces, will 
not be hindered by $|X|$ not being compact in the present work.
\end{remark}
\vskip 1em
Thus by the end of this section, we have the space $|X|$ for every 
compact metrizable space $X$ defined and studied in terms we need. Our 
goal from now on is to give an equivalent construction for all locally 
compact metrizable spaces; for this, we will first explore a particular 
class of function on such spaces that will be used in our construction.

As a note, we did not yet actually show that the space $|X|$ is in a 
sense unique for any given $X$; we shall prove this in the general case.

\section{Exhaustion functions}
\label{sec_exh_fun}

This section describes the construction of functions of a specific kind 
on metrizable topological spaces. The statements and proofs given here 
are easy to understand, but still necessary for the following section; 
thus, we provide them in full for clarity.

We begin by stating a standard form of some definitions we shall use:

\begin{definition}
For a topological space $X$, we say that $X$ is locally compact whenever 
every point of $X$ has an open neighborhood with compact closure; we say 
that $X$ is separable whenever $X$ is the closure of a countable subset 
of $X$; by a local compactum, we mean a locally compact separable 
metrizable space; and if $X$ is locally compact and Hausdorf, we use 
$X^{*} := X \cup {\infty}$ to denote the one-point closure of $X$.
\end{definition}

Now we define what we shall call an exhaustion function:

\begin{definition}
Let $X$ be a metrizable topological space. A continuous function 
$u\colon X \to (0,1]$ will be called an {\it exhaustion function} on $X$ 
if $u(x_n) \to 0,\,n \to \infty$, for every sequence ${x_n}$ of points 
of $X$ that has no accumulation points in $X$.
\end{definition}

Clearly, such a function $u\colon X \to (0,1]$ gives an exhaustion of 
$X$ by the non-decreasing sequence of compact sets 
$u^{-1}([\frac{1}{n},1])$, $n = 1,\,2,\,\ldots$. In fact, such a 
function is effectively a continuous equivalent of an exhaustion by 
sequence, assigning a compact set $u^{-1}([\varepsilon,1])$ to every 
$\varepsilon \in (0,1]$; this, of course, is the reason for the name. 
For completeness, we give a thorough description of when exhaustion functions exist:

\begin{proposition}
\label{prop_exh_equiv}
For a noncompact metrizable space $X$, the following are equivalent:

(1)There exists an exhaustion function on $X$;

(2)$X$ is the direct limit of a sequence of its compact subspaces 
$X_{0} \subseteq X_{1} \ldots \subseteq X_{n} \subseteq \ldots$ with the 
inclusion maps (and therefore, $X = \bigcup_{n \in \N} X_{n}$) such that 
$X_{n}$ is contained in the interior (in $X$) of $X_{n+1}$ for all $n$;

(3)$X$ is locally compact and separable (that is, a local compactum);

(4)$X$ is locally compact, and the point $\infty$ of $X^{*}$ has a countable 
basis of open neighborhoods;

(5)$X$ is locally compact, and $X^{*}$ is first-countable;

(6)$X$ is locally compact, and $X^{*}$ is perfectly normal;

(7)$X$ is locally compact, and $X^{*}$ is metrizable.
\end{proposition}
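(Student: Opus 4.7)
The plan is to establish the cycle $(1) \Rightarrow (2) \Rightarrow (3) \Rightarrow (7) \Rightarrow (6) \Rightarrow (5) \Rightarrow (4) \Rightarrow (1)$. For $(1) \Rightarrow (2)$, I would set $X_n := u^{-1}([1/(n+1), 1])$: the exhaustion property forces $X_n$ to be compact (any sequence in it has an accumulation point in $X$, hence in the closed set $X_n$), the inclusion $X_n \subseteq u^{-1}((1/(n+2), 1]) \subseteq X_{n+1}$ puts $X_n$ in the interior of $X_{n+1}$, and the direct limit topology agrees with that of $X$ since every point has a neighborhood basis already lying inside some $\mathrm{int}_X(X_{n+1})$. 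Then $(2) \Rightarrow (3)$ is immediate: $X$ is locally compact (each point lies in the interior of some compact $X_{n+1}$) and separable (as a countable union of the separable compact metrizable $X_n$).

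The central step is $(3) \Rightarrow (7)$. Since $X$ is separable metrizable it is second-countable, and local compactness lets me pass to a countable base $\{B_n\}$ whose members have compact closures. Setting $K_n := \overline{B_1 \cup \cdots \cup B_n}$ produces a compact exhaustion of $X$ with the property that every compact subset of $X$ lies in some $K_n$; hence $\{X^* \setminus K_n\}$ is a countable neighborhood basis at $\infty$, and $\{B_n\} \cup \{X^* \setminus K_n\}$ is a countable base for $X^*$. Since $X^*$ is compact Hausdorff (using local compactness of $X$), Urysohn's metrization theorem yields $(7)$. From here, $(7) \Rightarrow (6)$ is the standard fact that metrizable spaces are perfectly normal; for $(6) \Rightarrow (5)$ I would write $\{\infty\} = \bigcap_n V_n$ with $V_n$ open, pass to the decreasing intersections $W_n := V_1 \cap \cdots \cap V_n$, and use compactness of $X^*$ to show that any open neighborhood of $\infty$ contains some $W_n$ (first-countability at points of $X$ comes from metrizability of $X$); $(5) \Rightarrow (4)$ is trivial.

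For $(4) \Rightarrow (1)$, given a decreasing countable basis $\{U_n\}$ at $\infty$ with compact complements $K_n := X \setminus U_n$ (which exhaust $X$ by Hausdorffness of $X^*$), I would apply Urysohn's lemma in the normal space $X^*$ to produce continuous $f_n \colon X^* \to [0,1]$ with $f_n(\infty) = 0$ and $f_n|_{K_n} = 1$, then set $u := \sum_n 2^{-n} f_n$: this sum is uniformly convergent and hence continuous on $X^*$, strictly positive on $X$ (since each $x$ lies in some $K_n$, giving $u(x) \geq 2^{-n}$), bounded above by $1$, and vanishes at $\infty$, so its restriction to $X$ is an exhaustion function. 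No single step is mathematically deep; the main obstacle is bookkeeping around the point at infinity, and the key substantive input is Urysohn's metrization theorem used in $(3) \Rightarrow (7)$.
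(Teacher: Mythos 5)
Your proof is essentially correct, but it traverses the equivalences in the opposite direction from the paper on the $(3)$--$(7)$ portion, and this changes where the real work sits. The paper proves $(3)\Rightarrow(4)\Rightarrow(5)\Rightarrow(6)\Rightarrow(7)\Rightarrow(1)$: it builds a countable neighborhood basis at $\infty$ by hand from a countable dense set and ``marked'' balls, then upgrades first countability to perfect normality and finally writes down an explicit metric $d^{*}$ on $X^{*}$, after which $(7)\Rightarrow(1)$ is the one-line observation that $d^{*}(\infty,-)$ is an exhaustion function. You instead jump straight to $(3)\Rightarrow(7)$ by citing Urysohn's metrization theorem (second countability of $X^{*}$ coming from a countable base of relatively compact sets and the sets $X^{*}\setminus K_{n}$), descend through the nearly formal implications $(7)\Rightarrow(6)\Rightarrow(5)\Rightarrow(4)$, and then do the substantive construction at $(4)\Rightarrow(1)$ via a uniformly convergent sum $\sum_{n}2^{-n}f_{n}$ of Urysohn functions. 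Both routes are valid; the paper's is self-contained and constructive (it exhibits the metric on $X^{*}$ explicitly, which is in the spirit of Definitions~\ref{def_metr_quot_comp} and~\ref{def_metr_quot_gen}), while yours is shorter at the cost of invoking the metrization theorem as a black box and of a slightly more elaborate final step. Your $(1)\Rightarrow(2)\Rightarrow(3)$ matches the paper's argument.

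One small imprecision worth fixing: in $(6)\Rightarrow(5)$, writing $\{\infty\}=\bigcap_{n}V_{n}$ and setting $W_{n}:=V_{1}\cap\dots\cap V_{n}$, the compactness argument that every neighborhood $U$ of $\infty$ contains some $W_{n}$ only yields a cluster point lying in $\bigcap_{n}\overline{W_{n}}$, which need not equal $\{\infty\}$ for an arbitrary choice of the $V_{n}$. You should first shrink, using normality of the compact Hausdorff space $X^{*}$, to arrange $\overline{V_{n+1}}\subseteq V_{n}$ (or take $V_{n}=f^{-1}([0,1/n))$ for a continuous $f$ with $f^{-1}(0)=\{\infty\}$, which perfect normality provides); then $\bigcap_{n}\overline{W_{n}}=\{\infty\}$ and your argument closes. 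This is a routine patch, not a flaw in the strategy.
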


\begin{proof}
$(1) \Rightarrow (2)$ Given an exhaustion function $u$, take 
$X_{n} := u^{-1}([1/n,1])$; it is compact from the defining property of 
an exhaustion function. Moreover, $u^{-1}((1/(n+1),1])$ is open, so
$X_{n} \subseteq u^{-1}((1/(n+1),1]) \subseteq X_{n+1}$ shows that 
$X_{n}$ is contained in the interior of $X_{n+1}$.

$(2) \Rightarrow (3)$ Each $X_n$ is separable, being compact and 
metrizable; thus $X$, a countable union of all $X_n$, is also separable. 
Now every point of $x$ is contained in $X_{n}$ for some $n$, and thus 
has an open neighborhood $U$ contained in the interior of $X_{n+1}$; the 
closure of $U$ is then compact (contained in $X_{n+1}$, in fact).

$(3) \Rightarrow (4)$ Let $Y$ be a countable dense subset of $X$. Call 
an open subset $U$ of $X$ {\it marked} whenever if satisfies any (and 
then every) of the following equivalent conditions:

- $U$ has compact closure in $X$;

- the closure of $U$ in $X^{*}$ does not contain $\infty$;

- $\infty$ has an open neighborhood in $X^{*}$ disjoint from $U$.

Every point of $X$ has a marked neighborhood, and every open subset of a 
marked set is marked; choose a metric $d$ giving the topology of $X$, 
and let $F$ be the collection of all marked open $d$-balls $B(y,1/n)$ of 
radius $1/n$, $n = 1,\,2,\,\ldots$, with center 
$y \in Y$. Then $F$ is countable, and the sets of $F$ cover $X$: for a 
point $x \in X$, some open ball $B(x,2\varepsilon)$ has compact closure 
(in $X$), and $B(x,\varepsilon)$ contains some $y \in Y$ (as $Y$ is 
dense in $X$), so $B(y,\varepsilon)$ is marked (thus in $F$) and 
contains $x$. Now enumerate the sets in $F$ in some way as 
$U_{0},\, U_{1},\, \ldots,\, U_{n},\, \ldots$; for every $n \in \N$, 
take $V_{n}$ to be an open neighborhood of $\infty$ in $X^{*}$ disjoint 
from $U_{n}$, and define $W_{n} := \bigcap_{k = 0}^{n} V_{k}$. Then $W_{n}$ 
are open neighborhoods of $\infty$ with $W_{n+1} \subseteq W_{n}$ for 
all $n$. To see that $W_{n}$ form a basis of the system of open 
neighborhoods of $\infty$, take an arbitrary open neighborhood $V$ of 
$\infty$; the compact set $X^{*}\setminus V$ is covered by a finite 
subcollection of $F$, say by the sets $U_{0},\,\ldots,\,U_{n}$. Then 
$W_{n}$ is a open neighborhood of $\infty$ disjoint from each of these, 
therefore disjoint from $X^{*}\setminus V$, thus contained in $V$.

$(4) \Rightarrow (5)$ $X$, being metrizable, is first countable, and 
thus every point $x \in X$ also has a basis of the system of its open 
neighborhoods in $X^{*}$, by first restricting to an open neighborhood with 
compact closure in $X$. Since $\infty$ also has such a basis by the 
assumption, this makes $X^{*}$ first countable.

$(5) \Rightarrow (6)$ Let $\{W_{n}\}$ be a countable basis of the 
system of open neighborhoods of $\infty$ in $X^{*}$; then 
$\bigcap_{n \in \N} W_{n} = \{\infty\}$ (as $X^{*}$ is Hausdorf). Given a 
closed set $Z$ in $X^{*}$, $Z\setminus \{\infty\}$ is closed in $X$, thus 
(as $X$ is metrizable and so perfectly normal) an intersection of some 
countable collection of sets $U_{n}$ that are open in $X$ --- and thus 
also in $X^{*}$. Now if $\infty \notin Z$ or, equivalently, 
$Z = Z\setminus \{\infty\}$, then $Z = \bigcap_{n \in \N} U_{n}$ also 
proves $Z$ to be a $\mathrm{G}_{\delta}$ set in $X^{*}$; otherwise, this is 
proved by 
$Z = (Z\setminus \{\infty\}) \cup \{\infty\} = \bigcap_{n \in \N} (U_{n} \cup W_{n})$.

$(6) \Rightarrow (7)$ It is a well-known consequence of the Urysohn's 
construction that for any normal topological space $X$ and any closed
$\mathrm{G}_{\delta}$ subset $Z$ of $X$, there is a continuous function 
$f\colon X \to [0,1]$ such that $Z = f^{-1}(0)$ (whereas for $Z$ not 
$\mathrm{G}_{\delta}$, we can only ensure that $Z \subseteq f^{-1}(0)$). 
Thus let $u\colon X* \to [0,1]$ be continuous with 
$u^{-1}(0) = \{\infty\}$. Now $d^{u}(x,x') := |u(x) - u(x')|$ is a 
pseudometric on $X$; if the topology of $X$ is given by a metric $d$, 
then $d + d^{u}$ is also a metric giving the same topology. Define a 
metric $d^{*}$ on $X^{*}$ by $d^{*}(\infty,x) := u(x)$ and 
$d^{*}(x,x') := \min\{u(x) + u(x'),d(x,x') + |u(x) - u(x')|\}$ for all 
$x,x' \in X$. After checking that the triangle inequality holds, we see 
that $d^{*}$ is in fact a metric, and it gives the topology of $X^{*}$: the 
$d^{*}$-open balls $B_{d^{*}}(\infty,c)$ are open in $X^{*}$ as their complements 
$u^{-1}([c,1])$ are compact for $c > 0$, and form the basis of the 
system of open neighborhoods of $\infty$, since for any open 
neighborhood $V$ of $\infty$, $u$ has a nonzero minimum $c$ on the 
compact set $X^{*}\setminus V$, so $V$ contains $B_{d^{*}}(\infty,c)$.

$(7) \Rightarrow (1)$ For any metric $d^{*}$ providing the topology of 
$X^{*}$, $d^{*}(\infty,-)\colon X \ni x \mapsto d*(\infty,x) \in (0,1]$ is an 
exhaustion function on $X$.
\end{proof}

\begin{remark}
As we can see now, all local compacta --- and only local compacta --- 
have exhaustion functions. For a compactum, the constant function equal 
to $1$ everywhere is an exhaustion function; for a non-compact local 
compactum, the image of any exhaustion function must contain, more or 
less by definition, values arbitrarily close to $0$.
\end{remark}

\section{Representing fine shape from locally compact spaces into a local compactum}
\label{sec_classifier}

Now we are in position to achieve our aim: while 
Section~\ref{sec_compact_case} was effectively a restatement of 
Cathey's relevant results of~\cite{Cathey1981} in terms we use, here we 
shall extend the same to the case of non-compact spaces. Our approach 
relies on exhaustion functions, which exist precisely on locally compact 
separable metrizable spaces (local compacta), and therefore these will 
constitute the class of spaces we study.

We proceed in the same way as in Section~\ref{sec_compact_case}, 
modifying our expanded definitions according to the differences caused 
by the loss of compactness:

\begin{definition}\label{def_metr_quot_gen}
Let $X$ be a closed subset of a metrizable space $M$. For any metric $d$ 
giving the topology of $M$, define a metric $d_{M/X}$ on the set $M/X$ 
by $d_{M/X}(m,\{X\}) := d(m,X)$ and $d_{M/X}(m,m') := \min\{d(m,m'),d(m,X)+d(m',X)\}$ 
for all $m,m' \in M\setminus X$.
\end{definition}

\begin{remark}\label{rem_quot_diff}
As noted in~Remark~\ref{rem_not_quotient}, for noncompact $X$, different choices of $d$ 
will result in topologically different metrics $d_{M/X}$. However, for 
any closed subset $M'$ of $M$ with $M' \cap X$ compact, the inherited 
topology on $M'/(M' \cap X)$ is the quotient topology (although the 
metric $d_{M/X}$ restricted to $M'/(M' \cap X)$ is not the same as 
$d_{M'/(M' \cap X)}$).
\end{remark}

\begin{lemma}\label{lem_exh_AR}
For every local compactum $X$, there exist a locally compact separable 
AR $M$ that contains $X$ as a closed additive homotopy negligible 
subset, an exhaustion function $u$ on $M$ (and then $u|_{X}$ is an 
exhaustion function on $X$), and an additive homotopy 
$H\colon M \times [0,1] \to M$ such that 
$H_{0} = id_{M}$, $H(M \times (0,1]) \subseteq M\setminus X$, and also 
$u(H(m,s)) \geq u(m)$ for all $m \in M$ and all $s \in [0,1]$. In fact, 
it is always possible to have $u(H(m,s)) = u(m)$ for all $m$ and all $s$.
\end{lemma}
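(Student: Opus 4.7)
The plan is to build $M$ as a product of three factors: a compact AR housing a copy of $X^{*}$, a copy of $\R$ used to send the point at infinity of $X^{*}$ off to infinity so that $X$ itself becomes closed, and a copy of $[0,1]$ used to make $X$ additive homotopy negligible via the standard trick of Proposition~\ref{prop_AR_props}(6). The main subtlety is keeping the whole product locally compact while ensuring $X$ sits in it as a closed subset; this is precisely where the exhaustion function supplied by Proposition~\ref{prop_exh_equiv} will enter.

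First, since $X^{*}$ is a compactum, it embeds as a closed subset of the Hilbert cube $Q = [0,1]^{\N}$, which is an AR by Proposition~\ref{prop_AR_props}(1,2). Fix such an embedding $q \colon X^{*} \to Q$, and pick an exhaustion function $u \colon X \to (0,1]$. The assignment $\iota(x) := (q(x), 1/u(x))$ then gives a closed embedding $\iota \colon X \to Q \times \R$: it is an embedding because its first coordinate already is, and its image is closed because any sequence in $X$ without accumulation in $X$ has $u(x_{n}) \to 0$, so $1/u(x_{n}) \to \infty$ in $\R$. The space $Q \times \R$ is itself a locally compact separable AR.

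Next, set $M := Q \times \R \times [0,1]$ and identify $X$ with $\iota(X) \times \{0\}$. Then $M$ is a locally compact separable AR by Proposition~\ref{prop_AR_props}(2), and $H((q,r,s),t) := (q, r, \min\{s+t,1\})$ defines a homotopy with $H_{0} = \mathrm{id}_{M}$ and $H(M \times (0,1]) \subseteq M \setminus X$; a short case analysis on whether $s+t \leq 1$ or not verifies that $H$ is additive. Finally, put $u_{M}(q,r,s) := 1/(1+|r|)$. This is an exhaustion function on $M$ since the non-$\R$ factors are compact, and its restriction $u_{M}|_{X}(x) = u(x)/(u(x)+1)$ is an exhaustion on $X$, being the composition of $u$ with a homeomorphism $(0,1] \to (0,1/2]$ vanishing at $0$. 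Because $H$ fixes the first two coordinates, $u_{M}(H(m,t)) = u_{M}(m)$ for all $m$ and $t$, directly yielding the equality form of the last clause of the lemma.
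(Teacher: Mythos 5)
Your proof is correct, but it takes a genuinely different route from the paper's. The paper's proof is essentially a one-liner: it invokes an external result (Lemma~3.18 of~\cite{Melikhov2022F}) to obtain a locally compact separable AR containing $X$ as a closed subset, then multiplies by $[0,1]$ to gain homotopy negligibility and pulls the exhaustion function back along the projection. You instead build $M$ from scratch as $Q\times\R\times[0,1]$, using the metrizability of $X^{*}$ (available via Proposition~\ref{prop_exh_equiv}) to embed $X^{*}$ in the Hilbert cube and then the reciprocal of an exhaustion function to ``unroll'' the point at infinity along the $\R$ factor so that $X$ becomes closed. What your approach buys is self-containedness --- every ingredient is either in Proposition~\ref{prop_AR_props} or Proposition~\ref{prop_exh_equiv}, plus the standard embedding of compacta into $Q$ --- and a very concrete $M$ whose sublevel sets $u_{M}^{-1}([\varepsilon,1])$ are themselves compact ARs, which is pleasant in view of how the lemma is used in Definition~\ref{def_classifier}. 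What it costs is that your copy of $X$ sits in $M$ less canonically (via $x\mapsto(q(x),1/u(x),0)$), whereas the paper keeps the ambient AR abstract. One further small point in your favor: your homotopy $H_{t}(q,r,s)=(q,r,\min\{s+t,1\})$ satisfies the additivity identity $H_{s}\circ H_{t}=H_{\min\{s+t,1\}}$ of Definition~\ref{def_H_add} exactly (your case analysis is right), matching the formula the paper itself records in Proposition~\ref{prop_AR_props}(6); the paper's proof of this lemma instead writes $H((m,s),t):=(m,\max\{s,t\})$, which satisfies $H_{s}\circ H_{t}=H_{\max\{s,t\}}$ rather than the stated identity, so your version is the more literal reading of the definition. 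No gaps; the closedness argument for $\iota(X)$ via boundedness of $1/u$ on convergent sequences and the verification that $u_{M}$ depends only on the $\R$ coordinate (hence is preserved by $H$) are both complete.
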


\begin{proof}
By~\cite[Lemma~3.18]{Melikhov2022F}, we can choose $M$ that is locally 
compact and separable, and contains $X$ as a closed subset; now for any 
exhaustion function $u$ on $M$, replace $M$ by $M \times [0,1]$ (with 
$X = X \times \{0\}$), $u$ by $(m,s) \mapsto u(m)$, and take 
$H((m,s),t) := (m,\max\{s,t\})$.
\end{proof}

\begin{definition}\label{def_classifier}
Let $X$ be a local compactum, and choose $M$, $H$, and $u$ as given by 
the previous lemma. For every $\varepsilon \in (0,1]$, define compact 
subspaces $M_{\varepsilon} := u^{-1}([\varepsilon,1])$ and 
$X_{\varepsilon} := M_{\varepsilon} \cap X$. 
Definition~\ref{def_compact_class} gives us spaces $|M_\varepsilon|$ and 
$|X_{\varepsilon}|$; any metric $d$ giving the topology of $M$ metrizes 
all these via restricting $d_{M/X}$, as per Remark~\ref{rem_quot_diff} 
and Definition~\ref{def_compact_class}; thus we have an isometric 
embedding of $|M_\varepsilon|$ in $|M_\varepsilon'|$ (as metric spaces) 
whenever 
$\varepsilon > \varepsilon'$. Finally, define $|M|$ to be the union 
$\bigcup_{\varepsilon \in (0,1]} \{\varepsilon\} \times |M_{\varepsilon}|$, 
metrizable by 
$d_{|M|}((\varepsilon,\gamma),(\varepsilon',\gamma')) := 
|\varepsilon - \varepsilon'| + d_{|M_{\min\{\varepsilon,\varepsilon'\}}|}(\gamma,\gamma')$; 
here, we rely on the isometric embeddings for the second term to be 
well-defined.
\end{definition}

\begin{remark}
For compact $X$, we can choose $M$ compact, and $u$ everywhere equal to 
$1$; then $|M|$ will be the same as $|M|$ of 
Definition~\ref{def_compact_class} multiplied by $(0,1]$. As stated 
later in Corollary~\ref{cor_class}, $|M|$ is in general unique up to a 
homotopy equivalence; thus, up to the same, 
Definition~\ref{def_classifier} restricts to 
Definition~\ref{def_compact_class} on compacta.
\end{remark}
\vskip 1em
For this new definition, we want to extend the proterties of 
Proposition~\ref{prop_compact_classifier}:

\begin{proposition}\label{pr_class_ext}
In the notation of the preceding definition, 

(1)$|M|$ is an AR containing $|X|$ as a closed additive homotopy 
negligible subset;

(2)There is an FDR-embedding of $X$ into $|X|$, extending to a closed 
embedding of $M$ into $|M|$ with $X = |X| \cap M$ and an approaching 
strong deformation retraction of $|M|\setminus |X|$ onto $M\setminus X$.
\end{proposition}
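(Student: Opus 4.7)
I will first verify the additive homotopy negligibility of $|X|$ in $|M|$ via an explicit $\Gamma\colon|M|\times[0,1]\to|M|$, $\Gamma((\varepsilon,\gamma),s):=(\varepsilon,\gamma^{s})$ with $\gamma^{s}(t):=\gamma(\min\{s+t,1\})$; this fixes the first coordinate, slides each path toward its endpoint, satisfies $\Gamma_{0}=id$, is additive in the sense of Definition~\ref{def_H_add}, and for $s>0$ sends $|M|$ into $|M|\setminus|X|$ (since $\gamma^{s}(0)=\gamma(\min\{s,1\})\in M_{\varepsilon}\setminus X_{\varepsilon}$). Then Proposition~\ref{prop_AR_props}(5) reduces the AR status of $|M|$ to that of $|M|\setminus|X|$. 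For the latter, the observation is that a point of $|M|\setminus|X|$ is precisely a pair $(\varepsilon,\gamma)$ with $\gamma\colon[0,1]\to M\setminus X$ continuous and $u(\gamma(t))\geq\varepsilon$ for all $t$; equivalently, $|M|\setminus|X|$ is the hypograph of the continuous function $U\colon(M\setminus X)^{[0,1]}\to(0,1]$, $U(\gamma):=\min_{t}u(\gamma(t))$, and the straightening $(\varepsilon,\gamma)\leftrightarrow(\varepsilon/U(\gamma),\gamma)$ identifies it homeomorphically with the product $(0,1]\times(M\setminus X)^{[0,1]}$. Since $M\setminus X$ is an AR by Proposition~\ref{prop_AR_props}(5), this product is an AR by items (1)--(3) of the same proposition, so $|M|\setminus|X|$ is an AR, and then so is $|M|$.

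\textbf{Plan for part (2).} The embedding $\iota\colon M\to|M|$ is $\iota(m):=(u(m),\gamma_{m})$ with $\gamma_{m}(t):=H(m,t)$; since $H$ preserves $u$, $\gamma_{m}$ stays in $M_{u(m)}$, and $\gamma_{m}(0)=m$ gives both injectivity and $\iota^{-1}(|X|)=\iota(X)$, whence $X=|X|\cap\iota(M)$. Closedness of $\iota(M)$ in $|M|$ reduces to a compactness argument: convergence of $\iota(m_{n})$ in $|M|$ forces $u(m_{n})\to\varepsilon>0$, hence $\{m_{n}\}$ lies in the compact $M_{\varepsilon/2}$ eventually, giving a limit in $M$ whose image under $\iota$ matches the limit. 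For the approaching strong deformation retraction $\Phi\colon(|M|\setminus|X|)\times[0,1]\to|M|\setminus|X|$ (with $\Phi_{0}=id$ and $\Phi_{1}$ mapping into $\iota(M\setminus X)$), I will use two stages. For $r\in[0,1/2]$, keep the first coordinate at $\varepsilon$ and set $\Phi((\varepsilon,\gamma),r):=(\varepsilon,\gamma^{(1-2r)})$ via the compact-case reshaping $\gamma^{(s)}(t):=\gamma(t)$ for $t\leq s$ and $H(\gamma(s),t-s)$ for $t>s$, so that $\Phi_{1/2}$ lands at $(\varepsilon,\gamma_{\gamma(0)})$. For $r\in[1/2,1]$, keep the path equal to $\gamma_{\gamma(0)}$ and linearly interpolate the first coordinate from $\varepsilon$ to $u(\gamma(0))$; this is admissible because $\gamma_{\gamma(0)}$ stays on the single level $u=u(\gamma(0))\geq\varepsilon$, so any intermediate first coordinate in $[\varepsilon,u(\gamma(0))]$ is valid, and $\Phi_{1}((\varepsilon,\gamma))=\iota(\gamma(0))$. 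Additivity of $H$ makes $\gamma_{m}^{(s)}=\gamma_{m}$, and stage~2 is degenerate when $\varepsilon=u(m)$, so $\Phi$ is constant on $\iota(M\setminus X)$ as required.

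\textbf{Main obstacle.} The delicate step is checking that $\Phi$ is $(|X|\times[0,1])-|X|$-approaching: for any sequence $((\varepsilon_{n},\gamma_{n}),r_{n})$ in $(|M|\setminus|X|)\times[0,1]$ converging to $((\varepsilon_{\infty},\gamma_{\infty}),r_{\infty})\in|X|\times[0,1]$, some accumulation point of $\{\Phi((\varepsilon_{n},\gamma_{n}),r_{n})\}$ must lie in $|X|$. This is precisely where a naive noncompact version of the compact-case argument breaks down, as remarked after Proposition~\ref{prop_compact_classifier}. The rescue is that convergence in $|M|$ forces $\varepsilon_{\infty}>0$ and hence $\varepsilon_{n}\geq\varepsilon_{\infty}/2$ eventually; therefore the key evaluations $\gamma_{n}(s_{n})$ feeding the retraction (with $s_{n}=1-2r_{n}$ in stage~1, or $s_{n}=0$ in stage~2) stay in the compact set $M_{\varepsilon_{\infty}/2}$, and combined with $d(\gamma_{n}(s_{n}),X)\to 0$ they yield some accumulation point $m\in X_{\varepsilon_{\infty}}$; this produces the required accumulation point $(\varepsilon',\gamma_{m})\in|X|$ of $\Phi$'s images, where $\varepsilon'$ is the limit of the interpolated first coordinate. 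Thus the exhaustion function restores exactly the compactness needed for the approach property to hold in the noncompact setting.
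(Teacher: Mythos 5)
Your proof is correct and follows essentially the same route as the paper's: the same embedding $m\mapsto(u(m),H(m,\cdot))$, the same reduction of the AR property of $|M|\setminus|X|$ to that of $(0,1]\times(M\setminus X)^{[0,1]}$ (you exhibit a homeomorphism via $\varepsilon\mapsto\varepsilon/U(\gamma)$ where the paper exhibits a retraction via $q\mapsto\min\{q,U(\gamma)\}$), and the same decisive use of the compact level sets $u^{-1}([\varepsilon_{\infty}/2,1])$ to recover the accumulation points needed for the approach property. The only structural difference is that you deform directly onto $\iota(M)$ in two stages, whereas the paper first retracts onto the thickened copy $M'=\{(q,\gamma_{m})\mid u(m)\geq q\}$ and then invokes an ordinary strong deformation retraction of the pair $(M',X')$ onto $(M,X)$; both variants are valid.
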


\begin{proof}
(1)As in the proof of Proposition~\ref{prop_compact_classifier}(1), for 
all $\varepsilon \in (0,1]$, 
$|M_{\varepsilon}|$ is an AR containing $|X_{\varepsilon}|$ as a closed 
additive homotopy negligible subset; the same homotopy $\Gamma$ proves 
that $|X|$ is additive homotopy negligible in $|M|$: using 
Remark~\ref{rem_metr_seq}, assume 
that $(\varepsilon_{n},\gamma_{n}) \to (\varepsilon,\gamma)$, and see 
that for some $\varepsilon' \in (0,\varepsilon]$, $|M_{\varepsilon'}|$ 
contains $\gamma_{n}$ for all large enough $n$, and then for any 
converging sequence $s_{n} \to s$ in $[0,1]$, 
$\Gamma(\gamma_{n},s_{n}) \to \Gamma(\gamma,s)$ in $|M_{\varepsilon'}|$ 
(be warned that the existence of such $\varepsilon'$ is important for 
continuity here). Now the space $(M\setminus X)^{[0,1]}$ 
of paths in $M\setminus X$ is an AR by Proposition~\ref{prop_AR_props}(3), 
and $|M|\setminus |X|$ is a retract of $(0,1] \times (M\setminus X)^{[0,1]}$ via 
$(q,\gamma) \mapsto (\min\{q,\inf_{s \in [0,1]} u(\gamma(s))\},\gamma)$; 
therefore by Proposition~\ref{prop_AR_props}(5) and~(2) and~(4), $|M|\setminus |X|$ 
is an AR, and again by~(5) so is $|M|$.

(2)Embed $M$ into $|M|$ via $m \mapsto (u(m),\gamma_{m})$, where 
$\gamma_{m}(s) := H(m,s)$; here for $m \in X$, 
$\gamma_{m}(0) := \{X_{u(m)}\} \in M_{u(m)}/X_{u(m)}$, 
and also note that $u(\gamma_{m}(s)) \geq u(m)$ for all $s \in (0,1]$, 
since $u$ and $H$ were chosen via Lemma~\ref{lem_exh_AR}. Also denote by 
$M'$ the subset $\{(q,\gamma_{m}) \mid m \in M, u(m) \geq q\}$ of $|M|$ 
and take $X' := |X| \cap M'$; clearly $M \subseteq M' \subseteq |M|$, 
and there is a strong deformation retraction of $M'$ onto $M$ that 
restricts to a strong deformation retraction of $X'$ onto $X$. Thus it 
suffices to construct an approaching strong deformation retraction of 
$|M|\setminus |X|$ onto $M'\setminus X'$.

Now, similar to Proposition~\ref{prop_compact_classifier}(2), we define 
that approaching strong deformation retraction 
$R\colon (|M|\setminus |X|) \times [0,1] \to |M|\setminus |X|$ 
by $R((q,\gamma),s) := (q,\gamma_{s})$, where
$$\gamma_{s}(t) := \begin{cases} 
  \gamma(t),\,t \leq s; \\
  H(\gamma(s),t-s),\,t > s;
\end{cases}
$$
\noindent note that for $t \in [s,1]$, $u(\gamma_{s}(t)) \geq u(\gamma(s)) \geq q$ 
due to how $u$ and $H$ were chosen (also, $u(\gamma(0))$ is always 
defined and a point of $M\setminus X$, as $\gamma \in |M|\setminus |X|$), 
so that $R_{s}(q,\gamma)$ always stays in $|M|\setminus |X|$. Then $R$ 
is continuous, the image of $R_{1}$ is precisely $M'\setminus X'$, and 
$R$ stays constant on $M'\setminus X'$ (again, because $H$ was chosen additive), 
so it remains to show that $R$ is approaching. This is shown as in 
Proposition~\ref{prop_compact_classifier}(2), with one difference: if a 
sequence $\{(q_{n},\gamma_{n})\}$ in $|M|\setminus |X|$ converges to 
$(q,\gamma) \in |X|$ and $s_{n} \to 0$ in $[0,1]$, then 
$u(\gamma_{n}(s_{n})) \geq q_{n}$, so the sequence 
$\{\gamma_{n}(s_{n})\}$ (in $M$) has an accumulation point in the 
compact subset $u^{-1}([q,1]) \cap X = X_{q}$ of $M$.
\end{proof}

\begin{remark}
We know that the topological quotient space $M/X$ may not be metrizable 
for a noncompact $X$, and that different choices of a metric $d$ on $M$ 
lead to different topologies on the {\it set} $M/X$. Even then, it would 
be possible to extend $|X|$ of Definition~\ref{def_compact_class} by 
defining it to be the space of all approaching maps 
$\gamma\colon [0,1]\setminus\{0\} \to M\setminus X$ with the compact 
convergence topology; in the compact case, this is homeomorphic to the 
path space of Definition~\ref{def_compact_class}. Even in the noncompact 
case, the proof of Proposition~\ref{prop_compact_classifier}(3) works 
without changes. Finally, $X$ clearly embeds in 
that space in much the same way as in Definition~\ref{def_classifier}. 
The last step of the preceding proof, however, shows why that embedding 
is not an FDR-embedding, thus invalidating the whole approach: the 
sequence $\gamma_{n}(s_{n})$ might not have an accumulation point in the 
noncompact space $M$, which is why we first introduce some exhaustion of 
$M$ by compact subspaces and define a topology that relies on it.
\end{remark}

At this point, we obviously want to state the equivalent of 
Proposition~\ref{prop_compact_classifier}(3) to complete what we claim 
to be an extension of the whole Section~\ref{sec_compact_case}. This is 
the primary result of the present work, and it will be different in two 
notable ways. First, we will show extension of maps only to within a 
homotopy; we believe this to be reasonable, because the space $|X|$, as 
we are about to show, is unique (for a given $X$) up to a homotopy 
equivalence --- not up to a homeomorphism, not even for a compact $X$. 
As to the second difference, the proof will, of course, require 
the existence of exhaustion functions on $X$, meaning $X$ must be a 
local compactum --- a separable locally compact metrizable space. The 
spaces $A$ and $W$ (as in the compact version), on the other hand, need 
not be separable, and yet it is telling that the following proof does 
require those to be locally compact:

\begin{theorem}\label{th_univ}
For a local compactum $X$, $|X|$ has the following property: for every 
map $f\colon A \to |X|$ and every FDR-embedding $i\colon A \to W$ with 
$W$ (and then also $A$) locally compact, there is a map 
$f'\colon W \to |X|$ such that $f'|_{A} \simeq f$. Moreover, any two 
such maps are homotopic.
\end{theorem}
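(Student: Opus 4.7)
The plan is to mirror the proof of Proposition~\ref{prop_compact_classifier}(3), adapting it for the extra first-coordinate parameter $\varepsilon \in (0,1]$ carried by $|X|$ in the non-compact setting. First I would fix a setup $(M, u, H)$ via Lemma~\ref{lem_exh_AR}, so that Proposition~\ref{pr_class_ext} supplies $|M|$ as an AR with $|X|$ closed additive homotopy negligible, together with the approaching strong deformation retraction of $|M| \setminus |X|$ onto $M' \setminus X'$ and the further retraction of $M' \setminus X'$ onto the embedded copy of $M \setminus X$; let $R$ denote their composite.

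Following the compact argument, I would choose an AR $L'$ containing $A$ as a closed additive homotopy negligible subset, set $L := L' \times [0,1]$ with the additive homotopy $H_L((l,s),t) := (l, \max\{s,t\})$, and extend $f \colon A \to |X| \subseteq |M|$ via Lemma~\ref{extend_map_to_fSh}(1) to a map $F \colon L \to |M|$ with $F^{-1}(|X|) = A$. Applying Lemma~\ref{lem_fdr_change} to the FDR-embedding $A \hookrightarrow W$ with this $L$ yields an AR $N$ containing $W$ and $L$ as closed subsets with $L \cap W = A$, an extension $\bar H \colon N \times [0,1] \to N$ of $H_L$ with $\bar H_0 = id_N$ and $\bar H(N \times (0,1]) \subseteq N \setminus W$, and an approaching strong deformation retraction $R'$ of $N \setminus W$ onto $L \setminus A$. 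I would then define $f' \colon W \to |X|$ by setting $f'(p)$ to be the point of $|X|$ whose path at time $t \in (0,1]$ is $R(F(R'(\bar H(p,t)))) \in M \setminus X$ (read back as a point of $M$ via the embedding $m \mapsto (u(m), \gamma_m)$), and whose first coordinate is a continuous positive function of $p$ extracted from the first coordinates of $F$ along the trajectory --- for instance, the infimum over $t$ of the first coordinate of $F(R'(\bar H(p,t)))$ in $|M|$, which by the level-preserving property $u(H(m,s)) \geq u(m)$ from Lemma~\ref{lem_exh_AR} is positive and continuous in $p$.

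The main technical hurdle is verifying that $f'(p)$ actually lies in $|X|$ rather than only in $|M|$: its path must stay inside $M_\varepsilon \setminus X_\varepsilon$ for the chosen first coordinate $\varepsilon$, and must exhibit the required approaching behavior as $t \to 0$. The former follows from the level-preservation together with the structure of the embedding $M \hookrightarrow |M|$; the latter is the direct analog of the corresponding step in the compact proof, with the compactness of $X$ replaced by the compactness of $X_\varepsilon$ at the chosen level. Continuity of $f'$ via Remark~\ref{rem_metr_seq} and Remark~\ref{rem_on_class_topology}(2), and the homotopy $f'|_A \simeq f$ --- rather than strict equality, due to the first-coordinate adjustment --- then follow from tracing the construction restricted to $A$, where $\bar H$ reduces to $H_L$, $R'$ is the identity on $L \setminus A$, and $F$ extends $f$.

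For the uniqueness of $f'$ up to homotopy, given two extensions $f'_1, f'_2 \colon W \to |X|$, Proposition~\ref{prop_II_fdr} makes $A \times [0,1] \cup W \times \{0,1\} \hookrightarrow W \times [0,1]$ an FDR-embedding into a locally compact space. I would assemble a map on the source from $f'_1$ on $W \times \{0\}$, $f'_2$ on $W \times \{1\}$, and a concatenation of homotopies $f'_1|_A \simeq f \simeq f'_2|_A$ on $A \times [0,1]$ (which agree on the overlap $A \times \{0,1\}$), then apply the existence part just proven to extend it to $W \times [0,1] \to |X|$, yielding the desired homotopy between $f'_1$ and $f'_2$.
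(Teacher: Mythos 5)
Your architecture is the same as the paper's: extend the path assignment $w \mapsto \gamma_w$ exactly as in Proposition~\ref{prop_compact_classifier}(3), then supply a continuous positive first coordinate, and prove uniqueness by extending over the FDR-embedding $A \times [0,1] \cup W \times \{0,1\} \subseteq W \times [0,1]$ via Proposition~\ref{prop_II_fdr}. The uniqueness half is fine. But there is a genuine gap at the one step that is actually new in the non-compact setting: the construction of a continuous $f_1'\colon W \to (0,1]$ with $\gamma_w \in |X_{f_1'(w)}|$. You take $f_1'(p)$ to be the infimum over $t \in (0,1]$ of the level of $F(R'(\bar H(p,t)))$ and assert it is ``positive and continuous in $p$'' by the level-preserving property $u(H(m,s)) \geq u(m)$ of Lemma~\ref{lem_exh_AR}. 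That property controls only the $M$-side homotopy $H$, not the composite $RFR'\bar H(p,\cdot)$; positivity of the infimum at a fixed $p$ in fact comes from Proposition~\ref{prop_appr_equiv}(2) applied to the compact set $\bar H(\{p\} \times [0,1])$. More seriously, an infimum of continuous functions over the non-compact parameter set $(0,1]$ is only upper semicontinuous: nothing in your argument rules out a sequence $p_n \to p$ with $f_1'(p_n) \to 0$, which would break both continuity and positivity of the resulting map into $(0,1]$.

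This is exactly where the hypothesis that $W$ is locally compact must enter, and your existence argument never invokes it. The paper's proof takes a neighborhood $U$ of $p$ with compact closure $\bar U$, observes that $RFR'$ sends $\bar H(\bar U \times (0,1])$ into a compact subset of $M$, hence into $u^{-1}([\varepsilon,1])$ for a single $\varepsilon > 0$, concludes that the set $C = \{(w,q) : \gamma_w((0,1]) \subseteq u^{-1}([q,1])\}$ contains an open neighborhood of $W \times \{0\}$, and then applies a selection lemma ([Melikhov2022T, Lemma~18.15(a)]) to produce a continuous nowhere-zero function whose graph lies in that neighborhood. Any continuous positive minorant of your infimum would serve equally well --- one does not need the infimum itself --- but as written the continuity claim is unsupported, and it is the crux of the theorem. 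The remaining verifications you sketch (membership of $\gamma_w$ in $|X_\varepsilon|$ via compactness of $X_\varepsilon$, continuity of $f'$, the linear first-coordinate homotopy on $A$) do match the paper once a valid $f_1'$ is in hand.
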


\begin{corollary}\label{cor_class}
(1)For any local compactum $X$ and every locally compact metrizable space 
$Y$, there is a bijection $[Y,X]_{fSh} \cong [Y,|X|]$;

(2)For any local compactum $X$, the space $|X|$ is unique up to a 
homotopy equivalence.
\end{corollary}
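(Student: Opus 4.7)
The plan is to build an explicit bijection $\Phi\colon [Y,|X|] \to [Y,X]_{fSh}$ using the FDR-embedding $j\colon X \to |X|$ from Proposition~\ref{pr_class_ext}(2). Since $j$ is a fine shape equivalence by Proposition~\ref{prop_fdr_char}, the inverse $[j]_{fSh}^{-1}$ exists, and I would set $\Phi([g]) := [j]_{fSh}^{-1}\,[g]_{fSh}$. Well-definedness on homotopy classes and contravariant functoriality in $Y$ are immediate from Proposition~\ref{extend_homotopy_to_fSh}, so the task reduces to showing $\Phi$ is a bijection.

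For surjectivity, given $[\phi] \in [Y,X]_{fSh}$, I would invoke Theorem~\ref{th_fc} to obtain a fine shape cylinder $F$ of $[\phi]$ with a closed embedding $u\colon Y \to F$ and FDR-embedding $i\colon X \to F$ satisfying $[i]_{fSh}^{-1}[u]_{fSh} = [\phi]$. A preliminary technical step is to verify that, with both $X$ and $Y$ locally compact, the cylinder construction of~\cite{Zemlyanoy1} can be arranged so that $F$ itself is locally compact; this is required in order to invoke Theorem~\ref{th_univ}. That theorem, applied to $j$ along $i$, then produces $j'\colon F \to |X|$ with $j'\circ i \simeq j$, so that $[j']_{fSh} = [j]_{fSh}[i]_{fSh}^{-1}$, whence a short fine shape calculation gives $\Phi([j'\circ u]) = [\phi]$.

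For injectivity, suppose $g, g'\colon Y \to |X|$ with $[g]_{fSh} = [g']_{fSh}$; I need $g \simeq g'$. The idea is to form the fine shape cylinder $F$ of this common class, now viewed as an element of $[Y,|X|]_{fSh}$, with $u\colon Y \to F$ and FDR-embedding $i\colon |X| \to F$, so that $[u]_{fSh} = [i\circ g]_{fSh} = [i\circ g']_{fSh}$. Theorem~\ref{th_univ} applied to $id_{|X|}$ and $i$ yields a retraction (up to homotopy) $r\colon F \to |X|$ with $r\circ i \simeq id_{|X|}$, unique up to homotopy, so it suffices to show $i\circ g \simeq i\circ g'$, from which $g \simeq r\circ i\circ g \simeq r\circ i\circ g' \simeq g'$. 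To achieve this, I would recognize the ordinary mapping cylinders $W_g$ and $W_{g'}$ (whose canonical retractions onto $|X|$ recover $g$ and $g'$ exactly) as two further realizations of the fine shape cylinder of $[g]_{fSh}$, then use the uniqueness of~Theorem~\ref{th_univ} together with Lemma~\ref{lem_fdr_change} to iteratively align these two concrete realizations with $F$. This bridging step --- passing from uniqueness of extensions into the fixed space $|X|$ to uniqueness of the maps themselves --- is the main obstacle, as it requires combining the concrete mapping cylinder structure with the abstract fine shape cylinder in a way that is not directly forced by either construction.

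Part~(2) then follows by a Yoneda-type argument from part~(1). If $|X|'$ is another space satisfying the same bijection contravariantly functorially in $Y$, then the identity classes in $[|X|,|X|]$ and $[|X|',|X|']$ correspond under the respective bijections to universal classes $\xi \in [|X|,X]_{fSh}$ and $\xi' \in [|X|',X]_{fSh}$; feeding each of these back through the bijection for the other space produces maps $h\colon |X| \to |X|'$ and $h'\colon |X|' \to |X|$. Naturality in the domain, applied to $h$ and $h'$, shows that $h'\circ h$ and $h\circ h'$ both induce identity fine shape classes, and the injectivity established in part~(1) (applied with $Y = |X|$ and $Y = |X|'$) then forces $h'\circ h \simeq id_{|X|}$ and $h\circ h' \simeq id_{|X|'}$, so $h$ and $h'$ are mutually inverse homotopy equivalences.
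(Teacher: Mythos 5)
Your overall frame --- setting $\Phi([g]) := [i_X]_{fSh}^{-1}[g]_{fSh}$, proving surjectivity by forming the fine shape cylinder of a class in $[Y,X]_{fSh}$ and extending the embedding $X\to|X|$ over it via Theorem~\ref{th_univ} --- is the paper's argument, and your observation that one must first check the cylinder of a class between locally compact spaces can itself be taken locally compact is a legitimate point the paper passes over silently. The genuine gap is in your injectivity step, exactly where you flagged the ``main obstacle.'' You form the fine shape cylinder $F$ of $[g]_{fSh}$ regarded as a class in $[Y,|X|]_{fSh}$, so that $|X|$ is closed and FDR-embedded in $F$, and then apply Theorem~\ref{th_univ} with $A=|X|$ and $W=F$. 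But Theorem~\ref{th_univ} requires $W$ to be locally compact, and $F$ cannot be: it contains $|X|$ as a closed subspace, a closed subspace of a locally compact space is locally compact, and $|X|$ is not locally compact (the paper states this explicitly in Remark~\ref{rem_repr}). The same objection applies to the ordinary mapping cylinders $W_g$ and $W_{g'}$, which also contain $|X|$. So none of the spaces through which you want to ``iteratively align'' the realizations is eligible for Theorem~\ref{th_univ}, and the uniqueness clause you lean on is unavailable there. The paper's (admittedly terse) verification that the two constructions are mutually inverse never leaves the locally compact world: it only ever applies the extension and uniqueness clauses of Theorem~\ref{th_univ} to the cylinder $W$ of a class in $[Y,X]_{fSh}$, with both $X$ and $Y$ locally compact. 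Any repair of your argument must likewise route the comparison of $g$ and $g'$ through locally compact intermediaries (cylinders over $Y$ or $Y\times[0,1]$ with target $X$), not through spaces containing $|X|$.

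The same issue undercuts your part~(2). The Yoneda argument substitutes $Y=|X|$ and $Y=|X|'$ into the bijection of part~(1), but part~(1) is established only for locally compact $Y$, and neither $|X|$ nor $|X|'$ is locally compact; naturality over the category of locally compact spaces alone does not let you evaluate the functors at the representing objects themselves. This is precisely the defect of the representation acknowledged in Remark~\ref{rem_repr}. The paper instead obtains uniqueness from the universal property directly --- the FDR-embedding $X\to|X|$ of Proposition~\ref{pr_class_ext}(2) combined with the extension-up-to-homotopy and uniqueness-up-to-homotopy clauses of Theorem~\ref{th_univ} --- rather than from representability of $[-,X]_{fSh}$.
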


\begin{remark}
The first part of the corollary states that $|X|$ is the classifying 
space for fine shape classes from locally compact spaces into $X$. The 
second part shows why we can talk about {\it the} space $|X|$ for a 
given $X$, regardless of the choices made in the construction of 
Definition~\ref{def_classifier}. This was first mentioned in 
Remark~\ref{rem_compact_univ}, and now we prove it in the general case.
\end{remark}

\begin{proof}[Proof (of Corollary~\ref{cor_class})]
(1)For a class $[\phi] \in [Y,X]_{fSh}$ represented by any 
$Y-X$-approaching map $\phi$, apply the theorem with $A = X$, $W$ the 
fine shape cylinder of $\phi$ (see Theorem~\ref{th_fc}), and $f$ the 
embedding (of $X$ in $W$); restricting to the 
copy of $Y$ in $W$ singles out exactly one class in $[Y,|X|]$. 
Conversely, an element of $[Y,|X|]$ can be concatenated with the fine 
shape inverse to the embedding (of $X$ into $|X|$) to obtain an element 
of $[Y,X]_{fSh}$. Uniqueness up to a homotopy implies that the two 
constructions are inverse to each other, thus defining the bijection.

(2)Routine proof: $|X|$ is the universal space characterized by 
Proposition~\ref{pr_class_ext}(2) and the theorem.
\end{proof}

\begin{proof}[Proof (of Theorem~\ref{th_univ})]
Since every point of $|X|$ is a pair $(q,\gamma)$, we have the first 
coordinate map $f_{1}\colon A \to (0,1]$, and, for every point $a \in A$, 
a point $\gamma_{a}$ of every space $|X_{\varepsilon}|$ with 
$\varepsilon \in (0,f_{1}(a)]$. Following the proof of 
Proposition~\ref{prop_compact_classifier}(3), we can 
extend the latter to assign to every point $w \in W$ a point $\gamma_{w}$ 
of every space $|X_{\varepsilon}|$ for all $\varepsilon$ small enough; 
this works because (in the notation of that proof) $H$ is continuous, 
and $RFR'$ is a composition of approaching maps. This assignment is 
continuous wherever is it defined, same as in that proof.

Now if we construct some continuous map $f_{1}'\colon W \to (0,1]$ such 
that $\gamma_{w} \in |X_{f_{1}'(w)}|$ for all $w \in W$, then this 
immediately gives us the map $f'$ we need; the homotopy 
$f'|_{A} \simeq f$ can be taken linear on the first coordinate of each 
pair, and constant on the second. To that end, in the product 
$W \times [0,1]$, consider the set $C$ of all pairs $(w,q)$ 
such that $\gamma_{w}((0,1]) \subseteq u^{-1}([q,1])$; we claim that 
$C$ contains some open neighborhood $V$ of $W \times \{0\}$. Then 
by~\cite[Lemma~18.15(a)]{Melikhov2022T}, there must be some continuous 
map $p\colon W \to [0,1]$ such that 
$p^{-1}(0)$ is actually empty, and the graph of $p$ is contained in $V$. 
From that we will be able to define $f_{1}' := p$, finishing the proof.

To verify the claim, let $w \in W$. By local compactness, $w$ has an 
open neighborhood $U$ in $W$ with compact closure $\bar{U}$, and 
$H(\bar{U} \times [0,1])$ is then compact in $N$ (again, using the 
notation from the proof of Proposition~\ref{prop_compact_classifier}(3)). 
Then by Proposition~\ref{prop_appr_equiv}(2), the image of 
$H(\bar{U} \times (0,1])$, a subset of $N\setminus W$, under the 
approaching map $RFR'$ is contained in a compact subset of $M$, and thus 
in $u^{-1}[\varepsilon,1]$ for some $\varepsilon > 0$. Therefore 
$U \times [0,\varepsilon) \subseteq C$; clearly $W \times \{0\}$ is 
covered by products of such form in $W \times [0,1]$, the union of which 
can be taken as $V$. This ends the construction of $f'$.

Finally, assume we have two maps $f'$ and $f''$ from $W$ to $|X|$ such 
that $f'|_{A} \simeq f \simeq f''|_{A}$. By using those homotopies, we 
can construct a map $g\colon W \times \{0,1\} \cup A \times [0,1] \to |X|$; 
by Proposition~\ref{prop_II_fdr} and the construction of the current proof, we can obtain a map 
$g'\colon W \times [0,1] \to |X|$ such that its restriction to the 
domain of $g$ is homotopic to $g$. Denoting such a homotopy by $G$, we 
have a concatenation of homotopies 
$G|_{W \times \{0\}} \circ g' \circ G|_{W \times \{1\}}$ proving that 
$f' \simeq f''$.
\end{proof}

\begin{remark}\label{rem_repr}
It can be seen from the proofs that if we write $i_{X}$ for the 
FDR-embedding of $X$ into $|X|$, the bijection 
$[Y,X]_{fSh} \cong [Y,|X|]$ works by sending a homotopy class $[f] \in [Y,|X|]$ 
to $[i_{X}]_{fSh}^{-1}[f]_{fS}$, and for a fine shape class 
$[\phi] \in [Y,X]_{fSh}$, the corresponding homotopy class in $[Y,|X|]$ 
turns into the fine shape class $[\phi][i_{X}]_{fSh}$ in $[Y,|X|]_{fSh}$. 
It is now easy to see that, for fixed $X$, the bijection is 
contravariantly functorial in $Y$ --- or simply functorial, if $Y$ is 
taken as an object of the (opposed) category $\mathrm{hLCM^{*}}$ of 
locally compact metrizable spaces and {\it reversed} homotopy classes of 
continuous maps. Moreover, the homotopy class $[i_{X}]$ corresponds to 
the identity fine shape class $[id_{X}]_{fSh} \in [X,X]_{fSh}$. This, of 
course, is simply a case of the Yoneda lemma, as we represent the functor 
$[-,X]_{fSh}\colon \mathrm{hLCM*} \ni Y \mapsto [Y,X]_{fSh} \in \mathrm{Set}$ 
into the usual set category by $[-,|X|]$; this, too, is why we have 
called our construction a representation. 

Still, it is not precisely a representation in the strictest 
category-theoretical sense, because $|X|$ is not locally compact, so 
$[Y,|X|]$ is not a morphism set in the same category as $Y$. This 
suggests that a further search should be conducted to find a good class 
of spaces such that, at least, for any space $X$ of that class, we can 
construct $|X|$ belonging to a (possibly different, although preferably 
the same) class from which we can also choose any space $Y$ and still 
obtain the same bijection.
\end{remark}

\bibliographystyle{bibalph}
\bibliography{references} 

\begin{thebibliography}{10}
\providecommand{\url}[1]{\texttt{#1}}
\providecommand{\urlprefix}{URL }
\providecommand{\eprint}[2][]{\url{#2}}

\bibitem{Baladze2020}
V.~Baladze, A.~Beridze, and R.~Tsinaridze.
\newblock Strong shape theory of continuous maps.
\newblock 01 2020.
\newblock \urlprefix\url{https://www.researchgate.net/publication/338898450}.

\bibitem{Batanin1997}
M.~Batanin.
\newblock Categorical strong shape theory.
\newblock \emph{Cahiers Topol. G\'{e}om. Diff. Cat.}, volume~38:pages 3--66,
  1997.
\newblock ISSN 1245-530X.
\newblock
  \urlprefix\url{https://www.researchgate.net/publication/2659900_Categorical_Strong_Shape_Theory}.

\bibitem{Bauer1978}
F.~W. Bauer.
\newblock Some relations between shape constructions.
\newblock \emph{Cahiers de Topologie et Géométrie Différentielle
  Catégoriques}, volume~19(4):pages 337--367, 1978.
\newblock ISSN 1245-530X.
\newblock \urlprefix\url{https://eudml.org/doc/91206}.

\bibitem{Calder1981}
A.~Calder and H.~M. Hastings.
\newblock Realizing strong shape equivalences.
\newblock \emph{J. Pure and Appl. Alg.}, volume~20:pages 129--156, 02 1981.
\newblock \urlprefix\url{http://dx.doi.org/10.1016/0022-4049(81)90088-8}.

\bibitem{Cathey1981}
F.~W. Cathey.
\newblock Strong shape theory.
\newblock In S.~Mardesic and J.~Segal (editors), \emph{Shape theory and
  geometrical topology}, volume 870 of \emph{Lecture Notes in Math.}, pages
  215--238. Springer, 1981.
\newblock \urlprefix\url{https://doi.org/10.1007/BFb0089717}.

\bibitem{DydakSegal1981}
J.~Dydak and J.~Segal.
\newblock \emph{Strong shape theory}.
\newblock Instytut Matematyczny Polskiej Akademi Nauk, 1981.
\newblock \urlprefix\url{https://eudml.org/doc/268436}.

\bibitem{Kodama1979}
Y.~Kodama and J.~Ono.
\newblock On fine shape theory.
\newblock \emph{Fund. Math.}, volume 105(1):pages 29--39, 1979.
\newblock ISSN 0016-2736.
\newblock \urlprefix\url{https://eudml.org/doc/211079}.

\bibitem{Mardesic2000}
S.~Marde\v{s}i\'{c}.
\newblock \emph{Strong Shape and Homology}.
\newblock Springer-Verlag Berlin Heidelberg, 2000.
\newblock ISBN 978-3-642-08546-8.

\bibitem{Melikhov2022F}
S.~Melikhov.
\newblock Fine shape {I}, 2022.
\newblock \urlprefix\url{https://arxiv.org/abs/1808.10228v2}.

\bibitem{Melikhov2022T}
S.~Melikhov.
\newblock \emph{Topology of Metric Spaces}.
\newblock 11 2022.
\newblock \urlprefix\url{https://www.researchgate.net/publication/365476532}.

\bibitem{Mrozik2008}
P.~Mrozik.
\newblock Strong excision and strong shape invariants are equivalent for
  homology theories on the category of compact metric pairs.
\newblock \emph{Homol. Homot. Appl.}, volume~10:pages 345--351, 2008.
\newblock \urlprefix\url{http://intlpress.com/HHA/v10/n1/a15}.

\bibitem{Sklyarenko1979}
E.~Sklyarenko.
\newblock On homology theory associated with the aleksandrov–\v{C}ech
  cohomology.
\newblock \emph{Russ. Math. Surv.}, volume~34:pages 103--137, 1979.
\newblock
  \urlprefix\url{http://dx.doi.org/https://doi.org/10.1070/RM1979v034n06ABEH003307}.

\bibitem{Zemlyanoy1}
V.~Zemlyanoy.
\newblock Fine shape of metrizable spaces as a left fraction localization.
\newblock \urlprefix\url{https://arxiv.org/abs/2311.11624}.

\end{thebibliography}
\end{document}